\newtheorem{theorem}{Theorem}[section]
\newtheorem{lemma}{Lemma}[section]
\newtheorem{proposition}{Proposition}[section]
\newtheorem{corollary}{Corollary}[section]
\newtheorem{definition}{Definition}[section]
\theoremstyle{plane}
\newtheorem{remark}{Remark}[section]
\newcommand{\e}{^\varepsilon}
\newcommand{\eps}{{\varepsilon}}
\newcommand{\ds}{\displaystyle}
\newcommand{\I}{\mathcal{I}\e}
\renewcommand{\a}{\alpha}
\renewcommand{\b}{\beta}
\newcommand{\cupl}{\bigcup\limits}
\newcommand{\supp}{\mathrm{supp}}
\newcommand{\suml}{\sum\limits}
\newcommand{\intl}{\int\limits}
\newcommand{\liml}{\lim\limits}
\newcommand{\maxl}{\max\limits}
\newcommand{\minl}{\min\limits}
\newcommand{\Om}{{\widetilde{\Omega}}}
\newcommand{\Ga}{{\widetilde{\Gamma}}}
\renewcommand{\phi}{\varphi}
\renewcommand{\d}{\hspace{1pt}\mathrm{d}}
\numberwithin{equation}{section}
\begin{document}

%-----TITLE------------
\title{Spectral properties of elliptic operator with double-contrast coefficients near a hyperplane} 

\author{Andrii Khrabustovskyi\thanks{Corresponding author} ,  Michael Plum}
\affil{Institute for Analysis, Department of Mathematics, Karlsruhe Institute of Technology\\
Englerstra{\ss}e 2, 76131 Karlsruhe, Germany; 
Tel.: +49 721 608 42064, Fax.: +49 721 608 46530\\\smallskip
\texttt{andrii.khrabustovskyi@kit.edu, michael.plum@kit.edu}}

\date{\vspace{-5ex}}

\maketitle
\thispagestyle{empty}

\begin{abstract}\noindent
In this paper we study the asymptotic behaviour as $\eps\to 0$ of the spectrum of the
elliptic operator $\mathcal{A}\e=-{1\over b\e}\mathrm{div}(a\e\nabla)$ posed  in a bounded domain $\Omega\subset\mathbb{R}^n$ $(n \geq 2)$ subject to  Dirichlet boundary conditions on $\partial\Omega$.
When $\eps\to 0$ both coefficients $a\e$ and $b\e$ become  high contrast in a small neighborhood of a hyperplane $\Gamma$ intersecting $\Omega$. We prove that the spectrum of $\mathcal{A}\e$ converges to
the spectrum of an operator acting in $L^2(\Omega)\oplus L^2(\Gamma)$ and generated by the operation $-\Delta$ in $\Omega\setminus\Gamma$, the Dirichlet boundary conditions on $\partial\Omega$ and certain interface conditions on $\Gamma$ containing the spectral parameter in a
nonlinear manner. The eigenvalues of this operator may accumulate at a finite point. Then we study the same problem, when $\Omega$ is an infinite straight
strip (``waveguide'') and $\Gamma$ is parallel to its boundary. We  show that $\mathcal{A}\e$ has at least one gap in the spectrum when $\eps$ is small enough and describe the asymptotic behaviour of this gap as $\eps\to 0$. The proofs are based on methods of   homogenization theory.
\\ \ \\
Keywords: high-contrast coefficients, spectrum asymptotics, homogenization, periodic waveguides, spectral gaps
\end{abstract}

\section{\label{sec1}Introduction}

The problem we are going to study lies on the intersection of spectral theory and  homogenization theory for partial differential operators. We recall that one of the central problems of homogenization theory is to study the asymptotic behaviour as $\eps\to 0$ of the solution $u\e$ to the problem
\begin{gather*}
\mathcal{A}\e u\e=f\text{ in }\Omega,\\
u\e=0\text{ on }\partial\Omega,
\end{gather*}
where $\eps>0$ is a small parameter, $\mathcal{A}\e:= -\ds \mathrm{div}(a\e(x) \nabla )$, 
$\{a\e(x)\}_{\eps>0}$ is a family of real measurable functions satisfying
$$a_-\e\leq a\e(x)\leq a_+\e\text{\ with positive constants }a_\pm\e$$
and becoming highly oscillating as $\eps\to 0$. The typical example is $a\e(x)=\mathbf{a}(x\eps^{-1})$, where $\mathbf{a}(x)$ is a fixed $\mathbb{Z}^n$-periodic function. It is well-known (see, e.g, \cite{CioDon,Allaire}) that if 
\begin{gather}\label{infsup}
\inf\limits_{\eps}a_-\e>0,\quad \sup\limits_{\eps}a_+\e<\infty,
\end{gather}
then the family $\{u\e\}_{\eps}$ is compact in $L^2(\Omega)$, and if $u\e\to u$ as $\eps=\eps_k\to 0$ then
$u(x)$ is a solution of the problem 
\begin{gather*}
\mathcal{A}u=f\text{ in }\Omega,\\
u=0\text{ on }\partial\Omega,
\end{gather*}
where $\mathcal{A}:= -\ds \mathrm{div}(A(x) \nabla )$, $A(x)$ is some bounded matrix-function, bounded away from zero, which depends, in general, on the subsequence $\eps_k$. If $a\e(x)=\mathbf{a}(x\eps^{-1})$ then the whole sequence $u\e$ converges, and in this case $A(x)$ is a constant matrix. 
As we see the limit differential operator has qualitatively the same form as the initial one provided \eqref{infsup} holds.

If, on the contrary, conditions \eqref{infsup} are violated, for example there exist subsets $D\e\subset\Omega$ with non-zero measure such that $\liml_{\eps\to 0}\sup\limits_{x\in D\e}a\e(x)=\infty$ or
\begin{gather}\label{nonlclas}
\lim_{\eps\to 0}\inf_{x\in D\e}a\e(x)=0,
\end{gather} 
then the limit operator may have a more complicated form, which depends essentially on the structure of the domains $D\e$.  
We refer to the monograph \cite{March}, where various problems of this type are studied. In particular, the authors consider  the case, when condition \eqref{nonlclas} holds on the union $D\e$ of thin shells, distributed periodically, with period $\eps$, in the domain $\Omega$. They study the behaviour of linear evolution equations involving  such operators $\mathcal{A}\e$. Semi-linear evolution equations were investigated in 
\cite{Pank0,Pank1,Pank2}. Spectral properties of such operators were studied in \cite{Khrab2}. 

\begin{figure}[h]
\begin{center}
 \begin{picture}(350,190)
 \includegraphics{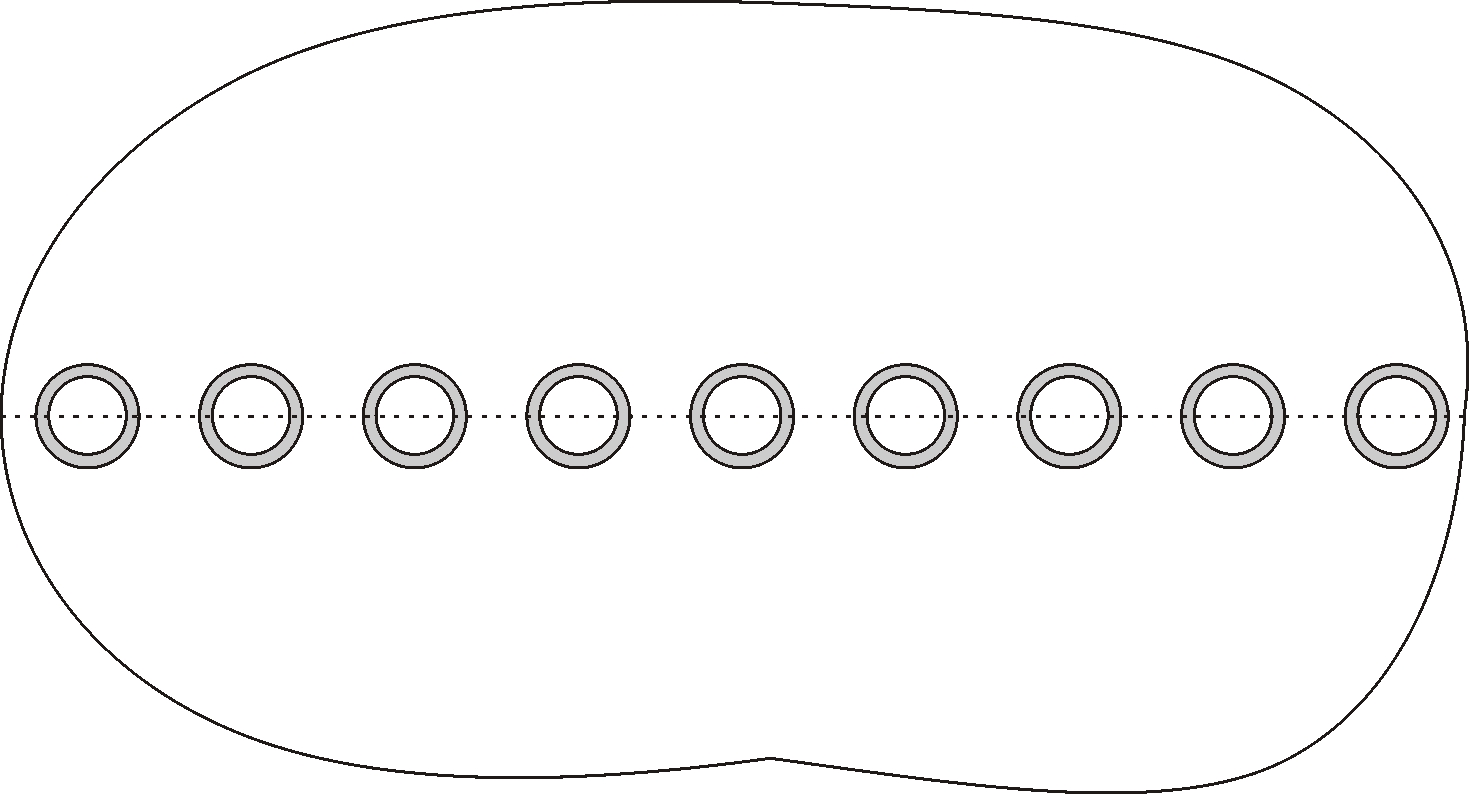}
\put(0, 8){$\Omega$} \put(-4,13){\vector(-3,2){30}}

\put(-70, 115){$\Gamma$} \put(-71,111){\vector(-1,-2){10}}

\put(-280, 116){$D_i\e$} \put(-270,112){\vector(1,-1){11}}
\put(-279,112){\vector(-1,-1){11}}

\put(-320, 60){$B_i\e$} \put(-308,70){\vector(3,4){14}}
\put(-320,70){\vector(-3,4){14}}

\put(-157,64){$^\eps$} \put(-150,70){\vector(1,0){15}}
\put(-161,70){\vector(-1,0){15}}

\put(-165,108){$_{R^\eps=R\eps}$}\put(-175,91){\vector(1,1){9}}
\put(-175,91){\line(1,1){13}}
\put(-162.4,104){\line(1,0){10}}

\put(-205,121){$_{d^\eps=o(\eps)}$}\put(-175,91){\vector(-2,3){5}}
\put(-192,116){\vector(2,-3){10}} \put(-192,116){\line(-1,0){10}}

\put(-178,86){$_{x^{i,\eps}}$}
\put(-175,91){\circle*{2}}

\end{picture}
\caption{}\label{fig1}
\end{center}
\end{figure}

In all papers mentioned above the case of \textit{bulk} distribution of shells was considered. In the present work we are interesting in the case of \textit{surface} distribution of shells, i.e.  the shells are located in a neighbourhood of some hyperplane. 

We notice that our research is inspired, in particular, by spectral problems for periodic differential operators posed in a waveguide-like 
domain. These applications are discussed later in the introduction.

Below we briefly present our main results.
Let $\Omega$ be a bounded domain in $\mathbb{R}^n$ ($n\geq 2$), $\eps>0$ be a small parameter. In $\Omega$ we consider the operator  
\begin{gather}\label{Amain}
\mathcal{A}\e =-\ds{1\over b\e(x)}\mathrm{div}(a\e(x)\nabla)
\end{gather}
subject to Dirichlet boundary conditions on $\partial\Omega$. Here the functions $a\e$ and $b\e$ are bounded above and bounded away from zero uniformly in $\eps$ everywhere except a small neighbourhood of some hyperplane $\Gamma$ having non-empty intersection with the domain $\Omega$.
More precisely, we denote by $D\e=\{D_i\e\}$ a family of thin spherical shells distributed periodically, with period $\eps$,  along $\Gamma$ (counted by the parameter $i$). Each shell has an external radius $R\e= R\eps$ (here $R\in (0,1/2)$ is a constant), the thickness of the shells is $d\e=o(\eps)$ as $\eps\to 0$. 
By $B\e=\{B_i\e\}$ we denote the union of balls surrounded by these shells (see Fig. \ref{fig1}). When $\eps\to 0$ the number $N(\eps)$ of shells goes to infinity as $\eps\to 0$, namely
\begin{gather}\label{total}
N(\eps)\sim { \eps^{1-n} |\Gamma|}
\end{gather}
(hereinafter we will use the same notation $|\cdot|$ for
the volume of a domain in $\mathbb{R}^n$ and as well for
the area of an $(n-1)$-dimensional hypersurface in $\mathbb{R}^{n}$).  We define the functions $a\e(x)$ and $b\e(x)$ as follows: $a\e(x)$ is equal to $1$ for $x\in\Omega\setminus D\e$ and equal to the constant $\a\e>0$ for $x\in D\e$, $b\e(x)$ is equal to $1$ for  $x\in\Omega\setminus B\e$ and equal to the constant $\b\e>0$ for $x\in B\e$.

Operators of the form \eqref{Amain} occur in various areas of physics. For example, in the case $n=3$ the operator $\mathcal{A}\e$ governs the
propagation of acoustic waves in a medium with mass density $({a}\e(x))^{-1}$ and compressibility ${b}\e(x)$. Also,  $\mathcal{A}\e$ describes vibrations of a body occupying the domain $\Omega$, the functions
$a\e(x)$ and $b\e(x)$ are its stiffness and 
mass density, correspondingly.  Notice, that the most interesting effects occur if
$$\a\e=\mathcal{O}(d\e),\quad \beta\e=\mathcal{O}(\eps^{-1})\text{ as }\eps\to 0.$$
In this case $\mathcal{A}\e$ describes vibrations of a body with a lot of tiny  \textit{heavy}  inclusions $B_i\e$ surrounded by thin \textit{soft} layers $D_i\e$. 

Asymptotics of eigenvibrations of a body
with a mass density perturbed near a hypersurface
was studied in a lot of papers -- see, e.g., \cite{Che1,Che2,Lobo1,Lobo2,CCDP,Melnyk2} and references therein.
The case of simultaneously perturbed density and stiffness (``double-contrast'') was studied in \cite{GLNP} (see also \cite{BKS}, where the case of bulk distribution of  double-contrast inclusions was studied). In all these works the geometry of a set supporting the perturbation differs essentially from that one considered in the current paper.

The spectrum  $\sigma(\mathcal{A}\e)$ of $\mathcal{A}\e$ is purely discrete. Our goal is to describe its behaviour as $\eps\to 0$ supposing that the following conditions hold:  
\begin{gather}\label{ass2}
\liml_{\eps\to 0}\ds{(d\e)^2\over\a\e}=0,\\\label{ass3}
\liml_{\eps\to 0}q\e =q\in [0,\infty],\text{ where }q\e=  {\a\e  n\over R d\e \eps\beta\e},\\\label{ass4}
\liml_{\eps\to 0}r\e =r\in [0,\infty),\text{ where }r\e=  R^n\varkappa_n \eps\beta\e.
\end{gather}
Here by $\varkappa_{n}$ we denote the volume of the $n$-dimensional unit ball.
We note, that $q$ is allowed to be infinite. 

Condition \eqref{ass2} means that the eigenfunctions cannot concentrate on the shells (cf.  \eqref{u_inD}). The case $\a\e=\mathcal{O}((d\e)^2)$ differs essentially and will not be studied in this paper.

The parameter $q$ characterizes the ``strength" of the coupling between the domain $\Omega\e:=\Omega\setminus\cupl_i\left(\overline{D_i\e\cup B_i\e}\right)$  and the union of the balls $B_i\e$. We postpone the precise statement to Section \ref{sec3} (see Remark \ref{remark_q}) because first we need to introduce some more notations.

The finiteness of $r$ implies the uniform (with respect to $\eps$) boundedness
of the ``mass'' $M_{B\e}$ of $B\e$, namely, using \eqref{total} and the fact that $d\e=o(\eps)$, we obtain:
\begin{gather*}
M_{B\e}:=\intl_{\cupl_i B_i\e}b\e(x) \d x=\beta\e \suml_{i}|B_i\e|=
\b\e(R\e-d\e)^n \varkappa_n N(\eps)=
r\e {(R\eps-d\e)^n N(\eps)\over R^n \eps }
\sim r|\Gamma|\text{ as }\eps\to 0.
\end{gather*}

In spite of the fact that $\mathcal{A}\e$ contains many parameters the behaviour of its spectrum as $\eps\to 0$ depends essentially only on $q$ being finite or infinite and $r$ being positive or zero. 

At this point we present the results in a formal way, more precise statements are formulated in the next section using the language of operator theory. Below the convergence of spectra is understood in the Hausdorff sense, see Definition \ref{def1}.
One has:
\begin{itemize}
\item Let $q<\infty$. In this case $\sigma(\mathcal{A}\e)$ converges, as $\eps\to 0$, to the union of the point $q$ and the set of eigenvalues of the $\lambda$-nonlinear spectral problem
\begin{gather}\label{formal1}
\begin{cases}
-\Delta u=\lambda u&\text{ in }\Omega\setminus\Gamma,\\ 
\left[u\right]=0,\quad \left[{\partial u\over\partial  n}\right]={\lambda qr\over q-\lambda} u&\text{ on }\Gamma,\\
u=0&\text{ on }\partial\Omega,
\end{cases}
\end{gather}
where the brackets denote the jump of the enclosed quantities.  

If $r>0$ then the set of eigenvalues of the problem \eqref{formal1} consists of two ascending sequences --- one of them tends to infinity and the second one tends to $q$ (in the case $q=0$ the second sequence is not present). 

We notice that in the case $\Gamma\subset\partial\Omega$ the interface conditions in \eqref{formal1} reduce formally to the boundary conditions
\begin{gather}\label{meromo}
{\partial u\over\partial  n}=\mathcal{F}(\lambda) u,
\end{gather}
where $n$ in the outward pointing unit normal to $\partial\Omega$, $\mathcal{F}(\lambda)={\lambda qr\over q-\lambda}$.
The boundary conditions of the form \eqref{meromo} 
appear in limit boundary value problems for various homogenization problems -- see, e.g., \cite{Lobo1,MN} (here $\mathcal{F}$ has infinitely many poles), \cite{CK} (here $\mathcal{F}$ has one pole).

\item Let $q=\infty$. In this case $\sigma(\mathcal{A}\e)$ converges to the set of  eigenvalues of the following Steklov-type  spectral problem:
\begin{gather}\label{formal2}
\begin{cases}
-\Delta u=\lambda u&\text{ in }\Omega\setminus\Gamma,\\ 
\left[u\right]=0,\quad \left[{\partial u\over\partial  n}\right]=\lambda r u&\text{ on }\Gamma,\\
u=0&\text{ on }\partial\Omega.
\end{cases}
\end{gather}
Note, that \eqref{formal2} is a formal limit of  \eqref{formal1} as $q\to\infty$.

\item The following estimate is valid:
\begin{gather}\label{tr}
\sup\limits_{k\in\mathbb{N}}\left(\underset{\eps\to 0}{\overline{\lim}}\lambda_k\e\right)\leq q,
\end{gather}
where $\lambda_k\e$ is the $k$-th eigenvalue of $\mathcal{A}\e$. 
From \eqref{tr} we immediately obtain
$$\forall k:\ \lambda_k\e\to 0\text{ as }\eps\to 0\text{ provided }q=0.$$
\end{itemize}

Note, that in the case $r=0$ the problems \eqref{formal1} and \eqref{formal2} reduce to the eigenvalue problem for the Dirichlet Laplacian in $\Omega$.

Also we note, that the choice of the boundary conditions on $\partial\Omega$ is not essential -- instead of the Dirichlet boundary conditions we can impose, for example, Neumann or mixed ones. These conditions will be inherited by the limit spectral problem.
\vspace{2mm}

In the last part of the paper we consider the same problem for a waveguide-like domain $\Omega$:
$$\Omega=\mathbb{R}\times \left(d_-,d_+\right),\quad \Gamma=\{x=(x_1,0):\ x_1\in\mathbb{R}\}.$$
where $d_-<0$, $d_+>0$. Here we are interested in the case $q>0$, $r>0$ only.

Due to the periodicity of $\mathcal{A}\e$ its spectrum is a locally finite union of compact intervals (bands). In general the bands may overlap and the natural question arising here
is whether gaps open up in the spectrum (i.e. whether there is an open interval $(\a,\b)\subset (0,\infty)$ such that $(\a,\b)\cap \sigma(\mathcal{A}\e)=\varnothing$, while $\a,\b\in \sigma(\mathcal{A}\e)$). This problem is interesting for applications since the presence of gaps is important for the description of wave processes which are governed by the differential operators under consideration. Namely, if the wave frequency belongs to a gap, then the corresponding wave cannot propagate in the medium without attenuation. This feature is a dominant requirement for so-called photonic crystals which are materials with periodic dielectric structure attracting much attention in recent years (see, e.g., \cite{Dorfler,Kuchment_PC}).

It was proved in \cite{Khrab2} that in the case $\Omega=\mathbb{R}^n$ and a bulk distribution of shells, the spectrum of $\mathcal{A}\e$ has a gap when $\eps$ is small enough. 

Our goal is to study whether  gaps will open up in case of our waveguide-like domain. We will prove that the spectrum of $\mathcal{A}\e$ converges in the Hausdorff sense to the spectrum of some  operator $\mathcal{A}$ which is 
 defined by the same differential expression as in the case of a bounded domain and its spectrum is as follows: if $q<\min\left\{{\pi^2\over d_-^2},{\pi^2\over d_+^2}\right\}$ then
$$\sigma(\mathcal{A})=
[\a_1,q]\cup[\a_2,\infty),$$
otherwise $\sigma(\mathcal{A})=[\a_1,\infty)$. Here $\a_1,\a_2$ are some positive numbers satisfying
$0<\a_1<q<\a_2$.
Thus if the waveguide is thin enough we have a gap in the spectrum of 
$\mathcal{A}$ (and, consequently, $\sigma(\mathcal{A}\e)$ has a gap when $\eps$ is small enough). 

Periodic perturbations of the Laplacian in wavegide-like domains leading to opening of spectral gaps were also studied in  \cite{ExKh,Borisov1,Borisov2,Nazarov1,Nazarov2-,Nazarov2,Nazarov3,Nazarov4,Yoshi}. In all these papers (except \cite{Borisov2,ExKh}) spectral gaps appear because of a perturbation of the boundary of the waveguide (for example by making small holes periodically distributed along the waveguide \cite{Nazarov2} or by dividing the waveguide into two parts coupled by a periodic system of small windows \cite{Borisov1}). In the  paper \cite{Borisov2} the authors
considered small perturbations of the Laplace operator in a cylindrical domain by second-order differential operators with periodic coefficients; they gave sufficient conditions on this perturbation for gap opening. These conditions are not valid for the operators considered in the present work.
In the paper \cite{ExKh} the authors perturbed the Laplace operator by a singular potential supported by a family of periodically distributed surfaces.
\medskip

The paper is organized as follows. In Section \ref{sec2} we set the problem  
and formulate  the main result (Theorem \ref{th1}), also we prove the estimate \eqref{tr} (Theorem \ref{th2}). Theorem \ref{th1} will be proven in Section \ref{sec3}: 
the case $q<\infty$ in Subsection \ref{subsec32} and the case $q=\infty$ in Subsection \ref{subsec34}. Finally, in Section \ref{sec4} we consider the case of a waveguide.

\section{\label{sec2} Setting of the problem and main result}

\subsection{\label{subsec21} The operator $\mathcal{A}\e$}

In what follows we denote the Cartesian coordinates in $\mathbb{R}^n$ by $x=(x_1,\dots,x_n)$ . 
Let $\Omega\subset \mathbb{R}^n$ be a bounded Lipschitz domain ($n\geq 2$). It is supposed that $0\in\Omega$. We denote by $\Gamma$ the intersection of $\Omega$ with the hyperplane $\{x_n=0\}$:
$$\Gamma=\left\{x\in\Omega:\ x_n=0\right\}.$$

Let $\eps>0$ be a small parameter. We denote by $x^{i,\eps}$, $i=(i_1,\dots,i_n)\in \mathbb{Z}^{n-1}$ the family of points, distributed periodically, with period $\eps$, on the plane $\left\{x_n=0\right\}$:
$$x^{i,\eps}=(\eps i_1,\eps i_2,\dots,\eps i_{n-1},0)$$
and introduce the following sets (see Fig. \ref{fig1}):
\begin{gather*}
D_{i}\e=\left\{x\in\mathbb{R}^n:\ R\e-d\e<|x-x^{i,\eps}|<R\e\right\},\quad
B_{i}\e=\left\{x\in\mathbb{R}^n:\ |x-x^{i,\eps}|<R\e-d\e\right\}.
\end{gather*}
Here
$$R\e=R\eps,\text{ where }R\in \left(0,{1\over 2}\right),\quad d\e=o(\eps)\text{ as }\eps\to 0.$$

We denote by  $\mathcal{I}\e$ the set of all multiindices $i\in\mathbb{Z}^{n-1}$ satisfying 
\begin{gather}\label{xingamma}
x^{i,\eps}\in\Gamma,\quad \mathrm{dist}(x^{i,\eps},\partial\Omega)\geq \eps{\sqrt{n}\over 2}.
\end{gather}
The inequality in \eqref{xingamma} implies that
the cube with center at $x^{i,\eps}$, side length $\eps$,  being oriented along the coordinate axes, belongs to $\overline{\Omega}$ whenever $i\in\mathcal{J}\e$. This condition is technical and is needed only to simplify the proof presentation.

We introduce the piecewise constant functions
\begin{gather}\label{ab}
a\e(x)=
\begin{cases}
1,&x\in\Omega\setminus \cupl_{i\in\mathcal{I}\e}D_i\e,\\
\alpha\e,&x\in \cupl_{i\in\mathcal{I}\e}D_i\e,
\end{cases}\quad b\e(x)=
\begin{cases}
1,&x\in\Omega\setminus \cupl_{i\in\mathcal{I}\e}B_i\e,\\
\beta\e,&x\in \cupl_{i\in\mathcal{I}\e}B_i\e,
\end{cases}
\end{gather}
where $\a\e,\ \b\e$ are positive constants.

Now, let us define accurately the operator formally given by $$\mathcal{A}\e=-{1\over b\e }\mathrm{div}(a\e\nabla)$$ 
(subject to the Dirichlet boundary conditions on $\partial\Omega$).
By $\mathcal{H}\e$ we denote the Hilbert space of functions from $L^2(\Omega)$ endowed with a scalar product
\begin{gather}\label{He}
(u,v)_{\mathcal{H}\e}=\intl_\Omega u(x)\overline{v(x)}b\e(x)  \d x.
\end{gather}
By $\eta\e$ we denote the sesquilinear form in $\mathcal{H}\e$ defined by  
\begin{gather}\label{eta}
\eta\e[u,v]=\intl_\Omega a\e(x)\nabla u\cdot\nabla\bar v \d x
\end{gather}
with $\mathrm{dom}(\eta\e)=H^1_0(\Omega)$. The form  $\eta\e$ is densely defined, closed, positive and symmetric, whence (cf. \cite[Chapter 6, Theorem 2.1]{K66}) there
exists a unique self-adjoint and positive operator
$\mathcal{A}\e$ associated with the form
$\eta\e$, i.e.
\begin{gather*}
(\mathcal{A}\e u,v)_{\mathcal{H}\e}=
\eta\e[u,v],\quad\forall u\in
\mathrm{dom}(\mathcal{A}\e),\ \forall v\in
\mathrm{dom}(\eta\e).
\end{gather*}

The domain of  $\mathcal{A}\e$ consists of all functions $u\in H^1_0(\Omega)$ with the corresponding restrictions
belonging to the spaces $H^2(D_i\e)$,
$H^2(B_i\e)$ (for all $i\in\mathcal{I}\e$), $H^2\left(\Omega\setminus\cupl_{i\in\I }\left(\overline{ D_i\e\cup B_i\e}\right)\right)$, and
satisfying the following conditions on $\partial D_{i}\e$:
\begin{gather}\label{conjugation}
\begin{cases} (u)^+=(u)^-\text{\quad and\quad } \ds\left({\partial
u\over\partial  n}\right)^+=\a\e\left({\partial u\over\partial
n}\right)^-,
&x\in\partial D_{i}\e\setminus\partial B_i\e,\\
(u)^+=(u)^-\text{\quad and\quad }\a\e\ds\left({\partial
u\over\partial  n}\right)^+=\left({\partial u\over\partial
n}\right)^-, &x\in \partial B_i\e
\end{cases}
\end{gather}
where $+$ (respectively, $-$) denote the traces of the
function $u$ and its normal derivative taken from the exterior
(respectively, interior) side of either
$\partial D_{i}\e\setminus\partial B_i\e$ or
$\partial B_i\e$. 

The spectrum $\sigma(\mathcal{A}\e)$ of the operator $\mathcal{A}\e$ is purely discrete. Our goal is to describe the behaviour of $\sigma(\mathcal{A}\e)$ as $\eps\to 0$ under the assumption that conditions \eqref{ass2}-\eqref{ass4} hold.

\subsection{\label{subsec22} The limit operator}

Next we introduce the limit operator $\mathcal{A}_{q,r}$.

Let $r>0$. We denote by $\mathcal{H}_r$ the Hilbert space of functions from $L^2(\Omega)\oplus L^2(\Gamma)$ endowed with the scalar product
\begin{gather}\label{H_qr}
(U,V)_{\mathcal{H}_r}=\intl_{\Omega}u_1(x)\overline{v_1(x)} \d x+r\intl_\Gamma u_2(x)\overline{v_2(x)}  \d s,\quad U=\left(u_1,u_2\right),\ V=(v_1,v_2).
\end{gather}
Hereinafter, we use the standard notation $\d s$ for the density of the measure generated 
on $\Gamma$ (or any other $(n-1)$-dimensional hypersurface) by the Euclidean metric in $\mathbb{R}^n$.

For $q<\infty$ we introduce the sesquilinear form $\eta_{q,r}$ in $\mathcal{H}_r$ by the formula
\begin{gather}\label{eta_qr}
\eta_{q,r}[U,V]=\intl_\Omega {\nabla u_1}\cdot {\nabla \overline{v_1}}\d x+qr\intl_\Gamma (u_1-u_2)(\overline{v_1-v_2})\d s,\quad U=\left(u_1,u_2\right),\ V=(v_1,v_2)
\end{gather}
with $\mathrm{dom}(\eta_{q,r})=H_0^1(\Omega)\oplus L^2(\Gamma)$. 
Here we use the same notation for the functions $u_1$, $v_1$ and their traces on $\Gamma$.

For $q=\infty$ we introduce the sesquilinear form $\eta_{\infty,r}$ in $\mathcal{H}_r$ by the formula
\begin{gather*}
\eta_{\infty,r}[U,V]=\intl_\Omega {\nabla u_1}\cdot {\nabla \overline{v_1}}\d x,\ U=(u_1,u_2),\ V=(v_1,v_2)
\end{gather*}
with $\mathrm{dom}(\eta_{\infty,r})=\left\{U=(u_1,u_2)\in H_0^1(\Omega)\oplus L^2(\Gamma):\ u_1|_\Gamma=u_2\text{ on }\Gamma\right\}$.

The forms $\eta_{q,r}$ ($q<\infty$) and $\eta_{\infty,r}$ are densely defined, closed, positive and symmetric. Then we defined the limit operator $\mathcal{A}_{q,r}$ by
$$\mathcal{A}_{q,r}=
\begin{cases}
\text{the operator acting in $\mathcal{H}_r$ and associated with the form  }\eta_{q,r},&q<\infty,\ r>0,\\
\text{the operator acting in $\mathcal{H}_r$ and associated with the form  }\eta_{\infty,r},&q=\infty,\ r>0,\\
(-\Delta_{\Omega})\oplus qI,\text{ acting in }L^2(\Omega)\oplus L^2(\Gamma),&q<\infty,\ r=0,\\
(-\Delta_{\Omega}),\text{ acting in }L^2(\Omega),&q=\infty,\ r=0,\\
\end{cases}$$
where $\Delta_\Omega$ is the Dirichlet Laplacian in $\Omega$ with domain $H^2(\Omega)\cap H^1_0(\Omega)$, $I$ is the identity operator.

\subsection{\label{subsec23} Spectrum of the operator $\mathcal{A}_{q,r}$}

Let $q<\infty$, $r>0$. If $q=0$ then $\mathcal{A}_{q,r}$  is a direct sum of the operator $-\Delta_{\Omega}$ and the null operator in $L^2(\Gamma)$, implying 
$\sigma(\mathcal{A}_{0,r})=\sigma(-\Delta_{\Omega})\cup\{0\}.$
In the case $q>0$ one has the following lemma.

\begin{lemma}\label{lmAqr}
Let $q>0$. Then the spectrum of the operator $\mathcal{A}_{q,r}$ has the form
\begin{gather*}
\label{spectrum} \sigma(\mathcal{A}_{q,r})=\{q\}\cup \left(\cupl_{k\in\mathbb{N}}\{\lambda_k^-\}\right)\cup
\left(\cupl_{k\in\mathbb{N}}\{\lambda_k^+\}\right).
\end{gather*}
The points $\lambda_k^{\pm}, k\in\mathbb{N}$ belong to the discrete
spectrum, $q$ is a point of the essential spectrum and 
\begin{gather}\label{distr}
0<\lambda_1^{+}\leq \lambda_2^{+}\leq ...\leq\lambda_k^{+}\leq\dots\underset{k\to\infty}\to
q<  \lambda_1^{-}\leq
\lambda_2^{-}\leq ...\leq\lambda_k^{-}\leq\dots\underset{k\to\infty}\to \infty.
\end{gather}
\end{lemma}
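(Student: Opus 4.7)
The plan is to decouple the two components of $U=(u_1,u_2)$ in the weak eigenvalue equation $\eta_{q,r}[U,V]=\lambda(U,V)_{\mathcal{H}_r}$ and reduce the spectral problem to a nonlinear-in-$\lambda$ eigenvalue problem for $u_1$ alone. For $\lambda\neq q$, testing against $V=(0,v_2)$ forces the pointwise relation $u_2=\ds\frac{q}{q-\lambda}u_1|_\Gamma$; substituting this back and then testing against $V=(v_1,0)$ yields the equivalent variational equation
\[
\intl_\Omega\nabla u_1\cdot\nabla\bar v\d x+\tau(\lambda)\intl_\Gamma u_1\bar v\d s=\lambda\intl_\Omega u_1\bar v\d x\quad(\forall\,v\in H^1_0(\Omega)),
\]
with $\tau(\lambda):=\ds\frac{qr\lambda}{\lambda-q}$. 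Thus $\sigma(\mathcal{A}_{q,r})\setminus\{q\}$ coincides, with multiplicity, with the set of $\lambda\in\mathbb{R}\setminus\{q\}$ satisfying $\lambda\in\sigma(L_{\tau(\lambda)})$, where $L_\tau$ is the self-adjoint operator in $L^2(\Omega)$ associated with the form $u\mapsto\intl_\Omega|\nabla u|^2\d x+\tau\intl_\Gamma|u|^2\d s$ on $H^1_0(\Omega)$.

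Since the form domain $H^1_0(\Omega)$ embeds compactly in $L^2(\Omega)$, each $L_\tau$ has purely discrete spectrum $\mu_1(\tau)\leq\mu_2(\tau)\leq\dots\to\infty$, and the min-max principle makes every $\mu_k(\cdot)$ continuous and non-decreasing in $\tau$. A direct computation yields $\tau'(\lambda)=-q^2r/(\lambda-q)^2<0$, so $\tau$ is strictly decreasing on each of $(-\infty,q)$ and $(q,\infty)$, with $\tau(0)=0$, $\tau(q^-)=-\infty$, $\tau(q^+)=+\infty$ and $\tau(\pm\infty)=qr$. Consequently $G_k(\lambda):=\mu_k(\tau(\lambda))-\lambda$ is strictly decreasing on each of these intervals. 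On $(0,q)$, $G_k(0)=\mu_k(0)>0$ (the $k$-th Dirichlet eigenvalue of $-\Delta$ in $\Omega$) and $G_k(q^-)=-\infty$, producing a unique root $\lambda_k^+\in(0,q)$. On $(q,\infty)$, $G_k(q^+)=\mu_k(+\infty)-q$, where by a standard penalty argument $\mu_k(+\infty)$ is the $k$-th Dirichlet eigenvalue of $-\Delta$ on $\Omega\setminus\Gamma$; combined with $G_k(+\infty)=-\infty$ this produces a unique root $\lambda_k^-\in(q,\infty)$ as soon as $k$ is large enough that $\mu_k(+\infty)>q$. After relabeling I obtain the two sequences appearing in \eqref{distr}.

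The convergences $\lambda_k^+\to q$ and $\lambda_k^-\to\infty$ follow by contradiction: a subsequence of $\lambda_k^\pm$ remaining in a compact subset of $\mathbb{R}\setminus\{q\}$ would keep $\tau(\lambda_k^\pm)$ in a bounded $\tau$-interval on which $\mu_k(\tau)\to\infty$ uniformly in $k$ (by monotonicity and $\mu_k(\tau_0)\to\infty$ for any fixed $\tau_0$), contradicting $\mu_k(\tau(\lambda_k^\pm))=\lambda_k^\pm$. Closedness of $\sigma(\mathcal{A}_{q,r})$ then forces $q\in\sigma(\mathcal{A}_{q,r})$; since $q$ cannot be isolated, it belongs to $\sigma_{\mathrm{ess}}(\mathcal{A}_{q,r})$, while each $\lambda_k^\pm$ is isolated and has finite multiplicity inherited from the corresponding eigenvalue of $L_{\tau(\lambda_k^\pm)}$. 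The most delicate step, I expect, is the bookkeeping of multiplicities through the reduction -- especially in the degenerate case where an eigenfunction of $L_{\tau(\lambda)}$ happens to vanish on $\Gamma$ and the second-component relation degenerates -- together with the rigorous penalty identification of $\liml_{\tau\to+\infty}\mu_k(\tau)$ with the $k$-th Dirichlet eigenvalue on $\Omega\setminus\Gamma$.
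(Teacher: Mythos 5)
Your outline follows essentially the same route as the paper: decouple $u_2$ from $u_1$, reduce to a $\lambda$-parametrized family $L_{\tau(\lambda)}$ of Laplacians with a $\delta$-type interface term on $\Gamma$ (your $\tau$ is the paper's $-\mu$), establish continuity/monotonicity of the eigenvalue curves $\mu_k(\cdot)$ via min-max and a trace inequality, identify the two limits $\mu_k(\tau\to+\infty)=\lambda^D_k$ (Dirichlet problem on $\Omega\setminus\Gamma$) and $\mu_k(\tau\to-\infty)=-\infty$, and count crossings of $\mu_k(\tau(\lambda))$ with $\lambda$ on the two branches $(0,q)$ and $(q,\infty)$. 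All of that matches the paper's Proposition 3.1 and the curve-intersection picture.

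The gap is in the sentence ``Thus $\sigma(\mathcal{A}_{q,r})\setminus\{q\}$ coincides, with multiplicity, with the set of $\lambda$ satisfying $\lambda\in\sigma(L_{\tau(\lambda)})$.'' Your decoupling is carried out on the \emph{eigenvalue} equation only, so it characterises the point spectrum away from $q$; it does not, as stated, rule out essential spectrum at points $\lambda\neq q$, and without that you cannot conclude that the $\lambda_k^\pm$ are isolated nor that the spectrum consists of exactly $\{q\}\cup\{\lambda_k^\pm\}$. This is in fact the hardest and longest part of the paper's proof: it introduces the $\mathcal{A}_{q,r}$-compact perturbation $\mathcal{B}U=(0,q\,u_1|_\Gamma)$, invokes stability of $\sigma_{\mathrm{ess}}$ under relatively compact perturbations (with the Evans--Lewis--Zettl definition, since $\mathcal{A}_{q,r}+\mathcal{B}$ is non-self-adjoint), and then runs a Weyl singular-sequence argument to show $\lambda\neq q\Rightarrow\lambda\notin\sigma_{\mathrm{ess}}$. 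Alternatively, and more in keeping with your decoupling, you could prove the resolvent-level version directly by a Schur complement: for $\lambda\neq q$ with $\lambda\notin\sigma(L_{\tau(\lambda)})$ and given $F=(f_1,f_2)\in\mathcal{H}_r$, solve the $v_2$-equation for $u_2$ in terms of $u_1$ and $f_2$, substitute into the $v_1$-equation to get $(L_{\tau(\lambda)}-\lambda)u_1=f_1+\tfrac{qr}{q-\lambda}\,\delta_\Gamma f_2$ in the $H^{-1}$-variational sense, and solve uniquely with an a priori bound; this shows $\lambda$ is in the resolvent set. Either way, a concrete argument must be added. Incidentally, the case you single out as ``most delicate'' — an eigenfunction of $L_{\tau(\lambda)}$ with vanishing trace on $\Gamma$ — is actually harmless: then $u_2=0$ and $(u_1,0)$ is still a genuine eigenpair, with multiplicities matching; the real delicacy is the essential-spectrum step above. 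Finally, to conclude $\lambda_1^+>0$ you should also note that $\mathcal{A}_{q,r}\geq 0$ with trivial kernel, so there are no roots of $G_k$ in $(-\infty,0]$.
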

We will prove this lemma in Subsection \ref{subsec33}.\smallskip

It is easy to see that the operator $\mathcal{A}_{\infty,r}$ ($r>0$) has compact resolvent in view of the trace theorem and the Rellich embedding theorem. Therefore the spectrum of  $\mathcal{A}_{\infty,r}$ is purely discrete. 
\smallskip

Finally,
$$\sigma(\mathcal{A}_{q,0})=\sigma(-\Delta_{\Omega})\cup\{q\}.$$

\begin{remark}
Let $r>0$. It is easy to see that

\begin{itemize}
\item[-] if $q<\infty$ then 
\begin{multline*}
\lambda\text{ is an eigenvalue of }\mathcal{A}_{q,r},\ U=(u_1,u_2)\text{ is the corresponding eigenfunction}\\ \Longleftrightarrow\  u_2={q  u_1|_\Gamma\over q-\lambda}\text{ and }(\lambda,u_1)\text{ is an eigenpair of \eqref{formal1}}.
\end{multline*}

\item[-] if $q=\infty$ then 
\begin{multline*}
\lambda\text{ is an eigenvalue of }\mathcal{A}_{\infty,r},\ U=(u_1,u_2)\text{ is the corresponding eigenfunction}\\ \Longleftrightarrow\  u_2={u_1|_\Gamma}\text{ and }  (\lambda,u_1)\text{ is an eigenpair of \eqref{formal2}}.
\end{multline*}

\end{itemize}
Notice, that problem  \eqref{formal2} is a formal limit of  problem \eqref{formal1} as $q\to \infty$; that justifies the notation $\mathcal{A}_{\infty,r}$.
Also, setting $r=0$ in  \eqref{formal1} or \eqref{formal2},
we arrive at the eigenvalue problem for the Dirichlet Laplacian in $\Omega$; that justifies the notation $\mathcal{A}_{q,0}$.
\end{remark}

\subsection{\label{subsec24} The main results}

In what follows speaking about the convergence of spectra we will use the 
concept of  \textit{Hausdorff convergence}. 
 
\begin{definition}\label{def1}
The Hausdorff distance between two compact sets $X,Y\subset\mathbb{R}$ 
is defined as follows:
$$\mathrm{dist}_H(X,Y):=\max\limits\left\{\sup\limits_{
x\in X }\inf\limits_{y\in Y} |x-y|;
\sup\limits_{
y\in Y }\inf\limits_{x\in X} |y-x|\right\}.$$
The sequence of compact sets $X\e\subset\mathbb{R}$ converges to the compact set 
$X\subset\mathbb{R}$ in the Hausdorff sense if 
$$\mathrm{dist}_H(X\e,X)\to 0\text{ as }\eps\to 0.$$
\end{definition}

Now, we are in position to formulate the main result of this paper. 

\begin{theorem}\label{th1} 
Let $l\subset \mathbb{R}$ be an arbitrary compact interval.
Then the set $\sigma(\mathcal{A}\e)\cap l$ converges in the Hausdorff 
sense as $\eps\to 0$ to the set $\sigma(\mathcal{A}_{q,r})\cap l$.
\end{theorem}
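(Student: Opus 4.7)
The plan is to establish Hausdorff convergence via two complementary one-sided statements: (a) every $\lambda\in\sigma(\mathcal{A}_{q,r})\cap l$ is the limit of a sequence of points in $\sigma(\mathcal{A}\e)$, and (b) every accumulation point in $l$ of sequences $\lambda\e\in\sigma(\mathcal{A}\e)$ lies in $\sigma(\mathcal{A}_{q,r})$. Since the operators act on different Hilbert spaces the argument is mediated by a reconstruction operator $J\e:\mathrm{dom}(\eta_{q,r})\to\mathrm{dom}(\eta\e)$. Given $U=(u_1,u_2)\in H^1_0(\Omega)\oplus L^2(\Gamma)$, with $u_2$ first approximated by a function constant on each cell around $x^{i,\eps}$, I define $J\e U$ to equal $u_1$ outside $\cupl_i\overline{D_i\e\cup B_i\e}$, the constant $u_2(x^{i,\eps})$ on each $B_i\e$, and a radial linear interpolation between these two values across each shell $D_i\e$. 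The companion averaging operator, used in the opposite direction, assigns to $u\in\mathrm{dom}(\eta\e)$ the pair $(u,u_2\e)$, where $u_2\e$ is the piecewise constant function on $\Gamma$ equal to the mean value $\langle u\rangle_{B_i\e}$ on the $\eps$-cell around each $x^{i,\eps}$.

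For direction (a) I use the min-max principle. For the discrete eigenvalues $\lambda_k^\pm$ of $\mathcal{A}_{q,r}$ I take a $k$-dimensional space spanned by orthonormal eigenfunctions $U^{(1)},\dots,U^{(k)}$ and test the Rayleigh quotient of $\mathcal{A}\e$ on $\mathrm{span}\{J\e U^{(j)}\}$. A direct computation using \eqref{ass2}--\eqref{ass4} yields $\|J\e U\|_{\mathcal{H}\e}^2\to\|U\|_{\mathcal{H}_r}^2$ and $\eta\e[J\e U]\to\eta_{q,r}[U]$: the bulk term $\intl_\Omega|\nabla u_1|^2\d x$ comes from the complement of the shells, the weighted term $r\intl_\Gamma|u_2|^2\d s$ comes from the balls via $\b\e|B_i\e|\sim r\e\eps^{n-1}$, and the coupling term $qr\intl_\Gamma|u_1-u_2|^2\d s$ comes from the shells because the radial profile has $|\nabla J\e U|^2=(d\e)^{-2}|u_1-u_2|^2$, and the prefactor assembled over all shells equals $\a\e n R^{n-1}\varkappa_n/d\e=q\e r\e\to qr$. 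By min-max this gives $\limsup_{\eps\to 0}\lambda_k\e\leq\lambda_k^\pm$. For the essential-spectrum point $\lambda=q$ (when $q<\infty$, $r>0$), I employ a single-ball quasimode: a function equal to $1$ on one $B_i\e$, radially interpolating to $0$ across $D_i\e$, and zero elsewhere, has Rayleigh quotient exactly $q\e(1+o(1))\to q$, so the min-max principle guarantees an eigenvalue of $\mathcal{A}\e$ in every neighbourhood of $q$.

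For direction (b) I argue by compactness. Let $\lambda\e\to\lambda$ with eigenfunctions $u\e$ normalized so that $\|u\e\|_{\mathcal{H}\e}=1$, hence $\eta\e[u\e]=\lambda\e$. Condition \eqref{ass2} combined with a Friedrichs-type inequality on each shell shows that the oscillation of $u\e$ over each ball $B_i\e$ is negligible, so the piecewise constant $u_2\e$ built from the means $\langle u\e\rangle_{B_i\e}$ is uniformly bounded in $L^2(\Gamma,r\d s)$; simultaneously $u\e$ is bounded in $H^1$ on $\Omega\setminus B\e$. After extraction, $u\e\to u_1$ in $L^2(\Omega)$ and weakly in $H^1_0(\Omega)$, and $u_2\e\rightharpoonup u_2$ weakly in $L^2(\Gamma)$, and the norm identity above shows $\|(u_1,u_2)\|_{\mathcal{H}_r}=1$. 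Inserting the test functions $v\e=J\e V$ in the identity $\eta\e[u\e,v\e]=\lambda\e(u\e,v\e)_{\mathcal{H}\e}$ and passing to the limit term by term yields $\eta_{q,r}[(u_1,u_2),V]=\lambda((u_1,u_2),V)_{\mathcal{H}_r}$ for every $V\in\mathrm{dom}(\eta_{q,r})$, placing $\lambda$ in $\sigma(\mathcal{A}_{q,r})$ either through an honest eigenfunction $(u_1,u_2)$ with $u_1\not\equiv 0$, or through the degenerate case $u_1\equiv 0,\ u_2\not\equiv 0$ which forces $\lambda=q$.

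The principal obstacle is the precise asymptotic matching of the shell integrals to the coupling coefficient $qr$ and the uniform control of the ``negligible'' remainders (shell norm contributions, boundary cells near $\partial\Omega$, Friedrichs corrections for the ball means) in a form stable under variation of the test function. The case $q=\infty$ is handled analogously, with $J\e$ modified to force $u_2=u_1|_\Gamma$; the uniform bound $q\e r\e\intl_\Gamma|u_1\e-u_2\e|^2\d s\le C\eta\e[u\e]$ combined with $q\e\to\infty$ then yields $u_1|_\Gamma=u_2$ in the limit, placing $(u_1,u_2)$ in $\mathrm{dom}(\eta_{\infty,r})$. The cases $r=0$ reduce to the standard Dirichlet Laplacian since the $L^2(\Gamma)$ component of the limit space decouples, and the isolated point $\{q\}$ in $\sigma(\mathcal{A}_{q,0})$ is captured by the same quasimode construction as above.
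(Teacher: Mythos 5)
The main obstacle in your proposal is the min--max argument for direction (a), and it is not a fixable technicality. When $q<\infty$ and $r>0$ the limit operator $\mathcal{A}_{q,r}$ has essential spectrum $\{q\}$, with the discrete eigenvalues $\lambda_k^+$ accumulating at $q$ from below and a second sequence $\lambda_k^-$ lying strictly above $q$ (Lemma~\ref{lmAqr}). The Courant--Fischer formula $\inf_{\dim L=k}\sup_{u\in L}R[u]$ produces values that never exceed $\inf\sigma_{\mathrm{ess}}$; equivalently, as is already encoded in Theorem~\ref{th2}, for every fixed index $k$ one has $\limsup_{\eps\to0}\lambda_k\e\le q$. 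Consequently, testing the Rayleigh quotient on a fixed finite-dimensional span $\mathrm{span}\{J\e U^{(1)},\dots,J\e U^{(k)}\}$ can only produce approximations to the $\lambda_k^+$, and even there it yields only a one-sided estimate $\limsup\lambda_k\e\le\lambda_k^+$ without the matching lower bound you would need to conclude $\lambda_k\e\to\lambda_k^+$. It cannot reach any $\lambda_k^->q$ at all: to produce eigenvalues of $\mathcal{A}\e$ converging to $\lambda_1^-$ one has to look at indices $k(\eps)\to\infty$, which a fixed quasimode space cannot see. Your single-ball quasimode has the same defect: a Rayleigh quotient $\sim q$ gives $\lambda_1\e\lesssim q$ via min--max, not the existence of an eigenvalue near $q$; for that you would need an honest residual bound $\|\mathcal{A}\e v-qv\|/\|v\|\to0$, which requires a different calculation (and the function you describe is not even in $\mathrm{dom}(\mathcal{A}\e)$). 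The paper sidesteps all of this by proving property~\eqref{hb} through a resolvent contradiction: assuming a spectral gap around $\lambda$ for a subsequence, it picks $F\notin\mathrm{range}(\mathcal{A}_{q,r}-\lambda I)$, solves $(\mathcal{A}\e-\lambda)u\e=f\e$, and passes to the limit to produce $U$ with $(\mathcal{A}_{q,r}-\lambda)U=F$, a contradiction. That argument is indifferent to eigenvalue labeling and to whether $\lambda$ is above or below $q$, which is exactly what the shape of $\sigma(\mathcal{A}_{q,r})$ demands.

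Two smaller points in direction (b). First, the assertion that the normalization $\|u\e\|_{\mathcal{H}\e}=1$ passes to $\|(u_1,u_2)\|_{\mathcal{H}_r}=1$ is false: $\Pi_2\e u\e$ only converges weakly in $L^2(\Gamma)$, so in the limit you only have $\|u_1\|_{L^2(\Omega)}^2+r\|u_2\|_{L^2(\Gamma)}^2\le1$, and mass can be lost. This is precisely why the paper must treat the case $u_1=0$ separately, with the delicate decomposition $u\e=v\e-g\e+\delta\e$ and the estimates \eqref{v_est1}--\eqref{delta_H1}, to conclude $\lambda=q$. Your proposal notes that $u_1=0$ ``forces $\lambda=q$'' but gives no mechanism; this is in fact one of the more laborious parts of the argument. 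Second, the identification $\eta_{q,r}[(u_1,u_2),V]=\lambda((u_1,u_2),V)_{\mathcal{H}_r}$ requires passing to the limit in the coupling term assembled over the shells, and the control of the remainder there rests on Lemmas~\ref{lemma_est}, \ref{lemma_pm} (the mean-value transfer estimates between $\Gamma_i\e$, $S_i^{\eps,\pm}$, $B_i\e$) together with \eqref{ass2}; those estimates are the actual content of the ``precise asymptotic matching'' you flag as the main obstacle. Your overall decomposition of the quadratic form (bulk/shell/ball contributions converging to $\int|\nabla u_1|^2$, $qr\int|u_1-u_2|^2$, $r\int|u_2|^2$) matches the paper's computation, and the operators $J\e$, $\Pi_2\e$ are essentially the paper's test function $w\e$ of \eqref{we} and the operator $\Pi_2\e$; so direction (b) is morally right, but direction (a) needs the resolvent argument, not min--max.
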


\begin{remark}\label{remark2}
It is straightforward to show that the claim of Theorem \ref{th1} is equivalent
to the following two properties:
\begin{gather}\tag{A}\label{ha}
\text{if }\lambda\e\in\sigma(\mathcal{A}\e)\text{ and }\liml_{\eps\to
0}\lambda\e=\lambda\text{ then }\lambda\in
\sigma(\mathcal{A}_{q,r}),\\\tag{B}\label{hb} \text{for any }\lambda\in
\sigma(\mathcal{A}_{q,r})\text{ there exist }\lambda\e\in\sigma(\mathcal{A}\e)\text{ such
that }\liml_{\eps\to 0}\lambda\e=\lambda.
\end{gather}
\end{remark}

Before starting the proof of Theorem \ref{th1} we obtain an estimate concerning the 
behaviour of the $k$-th eigenvalue of $\mathcal{A}\e$. Note, that in the case $q>0$, $r>0$ this estimate follows easily from Theorem \ref{th1} and Lemma \ref{lmAqr}.

\begin{theorem}\label{th2}One has
\begin{gather}\label{qqq}
\sup\limits_{k\in\mathbb{N}}\left(\underset{\eps\to 0}{\overline{\lim}}\lambda_k\e\right)\leq q,
\end{gather}
where $\{\lambda_k^\eps\}_{k\in\mathbb{N}}$ is the
sequence of eigenvalues of the operator $\mathcal{A}\e$ written in ascending order and repeated according to multiplicity.
\end{theorem}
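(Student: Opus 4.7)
The plan is to apply the min-max (Poincaré) characterization of eigenvalues
\[
\lambda_k\e=\min_{\substack{V\subset H_0^1(\Omega)\\ \dim V=k}}\ \max_{u\in V\setminus\{0\}}\frac{\eta\e[u,u]}{\|u\|_{\mathcal{H}\e}^2}
\]
to a cleverly constructed $k$-dimensional trial space. The natural choice is a family of ``plateau'' functions, each supported in a single inclusion $\overline{B_i\e\cup D_i\e}$, which exploits the heaviness of $B_i\e$ (factor $\b\e$) against the smallness of $\a\e$ on the thin shell $D_i\e$. Since $N(\eps)\sim|\Gamma|\eps^{1-n}\to\infty$, for any fixed $k$ and all sufficiently small $\eps$ we can select $k$ distinct indices $i_1,\dots,i_k\in\I$.

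For each such index I would take the radially symmetric, piecewise linear function
\[
\phi_j\e(x)=
\begin{cases}
1,& |x-x^{i_j,\eps}|\le R\e-d\e,\\
\dfrac{R\e-|x-x^{i_j,\eps}|}{d\e},& R\e-d\e<|x-x^{i_j,\eps}|<R\e,\\
0,& \text{otherwise},
\end{cases}
\]
which lies in $H^1_0(\Omega)$ thanks to the separation condition \eqref{xingamma}. On $D_{i_j}\e$ one has $|\nabla\phi_j\e|^2=(d\e)^{-2}$, so a direct computation of the shell volume gives
\[
\eta\e[\phi_j\e,\phi_j\e]=\a\e\,(d\e)^{-2}\,\varkappa_n\bigl[(R\e)^n-(R\e-d\e)^n\bigr]\le \a\e\,\frac{n\,\varkappa_n (R\e)^{n-1}}{d\e},
\]
while
\[
\|\phi_j\e\|_{\mathcal{H}\e}^2\ge \b\e|B_{i_j}\e|=\b\e\varkappa_n(R\e-d\e)^n.
\]
Dividing and using the definition of $q\e$ in \eqref{ass3},
\[
\frac{\eta\e[\phi_j\e,\phi_j\e]}{\|\phi_j\e\|_{\mathcal{H}\e}^2}\le \frac{\a\e n}{R\e\, d\e\,\b\e}\cdot\bigl(1-d\e/R\e\bigr)^{-n}=q\e\cdot\bigl(1-d\e/R\e\bigr)^{-n}.
\]
Since $d\e=o(\eps)$, the correction factor tends to $1$, and $q\e\to q$ by assumption \eqref{ass3}.

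Finally, the supports of $\phi_1\e,\dots,\phi_k\e$ are pairwise disjoint (the centres $x^{i_j,\eps}$ are at distance $\ge \eps>2R\e$), so these functions are linearly independent and, moreover, for any $u=\sum_j c_j\phi_j\e$,
\[
\frac{\eta\e[u,u]}{\|u\|_{\mathcal{H}\e}^2}=\frac{\sum_j |c_j|^2 \eta\e[\phi_j\e,\phi_j\e]}{\sum_j |c_j|^2 \|\phi_j\e\|_{\mathcal{H}\e}^2}\le \max_{1\le j\le k}\frac{\eta\e[\phi_j\e,\phi_j\e]}{\|\phi_j\e\|_{\mathcal{H}\e}^2}\le q\e\bigl(1-d\e/R\e\bigr)^{-n}.
\]
Plugging this $k$-dimensional subspace into min-max yields $\lambda_k\e\le q\e(1+o(1))$, whence $\overline{\lim}_{\eps\to0}\lambda_k\e\le q$ for every $k\in\mathbb{N}$, and taking the supremum over $k$ gives \eqref{qqq}.

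There is no real obstacle here: the only technical point is tracking the shell-volume expansion $(R\e)^n-(R\e-d\e)^n=n(R\e)^{n-1}d\e(1+o(1))$ and confirming that $d\e=o(\eps)$ pushes the correction factor to $1$; assumption \eqref{ass2} is not even needed for this upper bound. The construction essentially shows that the shell–ball geometry produces an approximate ``eigenvalue'' $q\e$ with arbitrarily high multiplicity, which is the heuristic source of the limiting flat band at $\lambda=q$ in Lemma \ref{lmAqr}.
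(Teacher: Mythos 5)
Your proof is correct and follows the same strategy as the paper: apply the min-max principle to a $k$-dimensional trial space of bump functions, each supported on a single ball-plus-shell inclusion. The only substantive difference is the radial profile used to interpolate across the shell $D_{i}^\eps$: the paper uses the harmonic (capacitory) profile $G$ defined via $\rho^{2-n}$ (or $\ln\rho$ for $n=2$), which yields the sharp asymptotics $\eta\e[v_j\e,v_j\e]/\|v_j\e\|^2_{\mathcal{H}\e}\sim q\e$, whereas you use the piecewise linear profile and only need the one-sided bound $\leq q\e(1-d\e/R\e)^{-n}$. A bonus of your version is that, because you simply drop the shell's contribution to $\|\phi_j\e\|^2_{\mathcal{H}\e}$ rather than tracking it asymptotically, assumption \eqref{ass2} is indeed never invoked; the paper needs \eqref{ass2} in \eqref{frac} precisely to show the error term $\mathcal{O}(d\e/(\eps\b\e))$ vanishes so that the $\sim$ is legitimate. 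Both arguments work; yours is marginally more economical for a pure upper bound, while the paper's exact asymptotics foreshadow the role of $q$ as the accumulation point / essential spectrum value in Lemma~\ref{lmAqr}.
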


\begin{proof}Below we assume that $q<\infty$, otherwise the theorem is trivial.
One has the following min-max principle (cf. \cite[\S 4.5]{Davies}):
\begin{gather}\label{minmax+}
\lambda_k\e=\inf\limits_{L\in \mathcal{L}_k}\left(\sup\limits_{0\not= v\in L}{\eta\e[v,v]\over \|v\|^2_{\mathcal{H}\e}}\right).
\end{gather}
Here $\mathcal{L}_k$ is the set of all $k$-dimensional subspaces of $\mathrm{dom}(\eta\e)=H_0^1(\Omega)$.

We fix $k$ arbitrary pairwise different indices $\mathbf{i}\e_j\in\I$, $j=1,\dots, k$ and 
introduce the following continuous and piecewise smooth functions:
\begin{gather*}
v_j\e(x)=\begin{cases}
1,& x\in B\e_{\mathbf{i}\e_j},\\
\ds{G(|x-x^{i,\eps}|)},& x\in D\e_{\mathbf{i}\e_j},\\
0,&\text{otherwise}.
\end{cases}
\end{gather*}
where the function $G:\mathbb{R}\to\mathbb{R}$ is defined by
$$G(\rho)=\begin{cases}
\ds{\rho^{2-n}-(R\e)^{2-n}\over (R\e-d\e)^{2-n}-(R\e)^{2-n}},&n>2,\\
\ds{\ln\rho-\ln R\e\over \ln(R\e-d\e)-\ln R\e},&n=2.
\end{cases}$$

We denote 
$$L':=\mathrm{span}\{v_j\e,\ j=1,\dots,k\}.$$
Obviously, $L'\subset H_0^1(\Omega)$. Moreover, since $\mathrm{supp}(v_{j_1}\e)\cap\mathrm{supp}(v_{j_2}\e)=\varnothing$ as $j_1\not= j_2$, we conclude that  $\mathrm{dim}(L')=k$. Therefore $L'\in\mathcal{L}_k$. 

Taking into account that $d\e=o(\eps)$ we obtain the following asymptotics as $\eps\to 0$:
\begin{gather*}
\eta\e[v_j\e,v_j\e]=\left.\begin{cases}\a\e(n-2)\left((R\e-d\e)^{2-n}-(R\e)^{2-n}\right)^{-1}\omega_{n-1},&n>2\\
\a\e\left(\ln R\e-\ln(R\e-d\e)\right)^{-1},&n=2
\end{cases}\right\}\sim  {\a\e  R^{n-1} \omega_{n-1} \eps^{n-1}\over  d\e},
\\
\| v_j\e\|^2_{\mathcal{H}\e}=
\b\e R^n \eps^n\varkappa_n + \mathcal{O}(d\e \eps^{n-1}),
\end{gather*}
where $\omega_{n-1}$ is the area of the $(n-1)$-dimensional unit sphere.
Taking into account that $\omega_{n-1}=n\varkappa_n$ and using \eqref{ass2}, we obtain easily:
\begin{gather}\label{frac}
{\eta\e[v_j\e,v_j\e]\over \|v_j\e\|^2_{\mathcal{H}\e}}
={\a\e n\over Rd\e\eps\b\e\left(1+\mathcal{O}\left({d\e\over\eps\b\e}\right)\right)}
={\a\e n\over Rd\e\eps\b\e\left(1+q\e {(d\e)^2\over\a\e}\mathcal{O}(1)\right)}\sim q\text{ as }\eps\to 0. 
\end{gather}
Since the supports of $v_j\e$ are pairwise disjoint, \eqref{frac} holds true  for any arbitrary $v\in {L}'$ instead of $v_j\e$. 

Finally, using \eqref{minmax+} and \eqref{frac}, we obtain
\begin{gather*}
\lambda_k\e\leq \sup\limits_{v\in L'}{\eta[v,v]^2_{L^2(\Omega)}\over \|v\|^2_{\mathcal{H}\e}}\sim q. 
\end{gather*}
The theorem is proved.
\end{proof}

\begin{corollary}
For each $k\in\mathbb{N}$ $\lambda_k\e\to 0\text{ as }\eps\to 0$ provided $q=0$.
\end{corollary}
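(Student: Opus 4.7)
The corollary is essentially an immediate consequence of Theorem \ref{th2}, so the plan is very short. I would proceed as follows.

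First, I would recall that the operator $\mathcal{A}\e$ is self-adjoint and positive (it is associated with the closed, densely defined, positive, symmetric form $\eta\e$ defined in \eqref{eta}, as noted in Subsection \ref{subsec21}). Consequently, all eigenvalues satisfy $\lambda_k\e \geq 0$ for every $k\in\mathbb{N}$ and every $\eps>0$. This gives the trivial lower bound $\underline{\lim}_{\eps\to 0}\lambda_k\e\geq 0$.

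Next, I would apply Theorem \ref{th2} with the assumption $q=0$. The estimate \eqref{qqq} yields, for each fixed $k$,
\begin{gather*}
\overline{\lim}_{\eps\to 0}\lambda_k\e \leq \sup_{k\in\mathbb{N}}\left(\overline{\lim}_{\eps\to 0}\lambda_k\e\right)\leq q = 0.
\end{gather*}
Combining this with the lower bound $\lambda_k\e \geq 0$ forces $\lim_{\eps\to 0}\lambda_k\e = 0$, which is exactly the claim.

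There is essentially no obstacle here, since the corollary is a direct specialization of the theorem just proved, together with the trivial positivity of $\mathcal{A}\e$. The only thing one must be careful about is to observe that passing the limit $k$-by-$k$ is justified: the supremum over $k$ on the left-hand side of \eqref{qqq} dominates each individual term, so the bound applies uniformly in $k$. No additional work is needed.
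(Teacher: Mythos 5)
Your proof is correct and matches the paper's (implicit) reasoning: the corollary is stated without a separate proof precisely because it follows immediately from Theorem~\ref{th2} with $q=0$, combined with the positivity of $\mathcal{A}\e$ (which guarantees $\lambda_k\e\geq 0$). You are right to make the positivity step explicit, since the upper bound from \eqref{qqq} alone only gives $\overline{\lim}\,\lambda_k\e\leq 0$, and the nonnegativity of the eigenvalues is what pins down the limit to exactly $0$.
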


\section{\label{sec3}Proof of Theorem \ref{th1}}

We present the proof for the case $n\geq 3$ only. For the case $n=2$ the proof needs some small modifications (for example in \eqref{v} the function $|x-x^{i,\eps}|^{2-n}$ has to be replaced by $-\ln|x-x^{i,\eps}|$).

\subsection{\label{subsec31}Preliminaries}

In what follows by $C,C_1,C_2...$ we denote generic constants that do
not depend on $\eps$.

By $\langle u \rangle_B$ we denote the mean value of the function
$u(x)$ over the domain $B$: $$\langle u \rangle_B={1\over
|B|}\intl_B u(x)\d x.$$
If $\Sigma\subset \mathbb{R}^n$ is an $(n-1)$-dimensional surface then 
the Euclidean metric in $\mathbb{R}^n$ induces on $\Sigma$
the Riemannian metric and measure. As before we denote by $\d s$ the density
of this measure, and by $\langle u\rangle_\Sigma$ the
mean value of the function $u$ over $\Sigma$, i.e $\langle
u\rangle_\Sigma={1\over |\Sigma|}\intl_{\Sigma}u \d s$, $|\Sigma|=\intl_{\Sigma}\d s$.

We introduce the following sets (the sets $D_i\e$, $B_i\e$ were introduced above in Section \ref{sec2}):
\begin{itemize}

\item $\Omega\e=\Omega\setminus\cupl_{i\in\I }\left(\overline{ D_i\e\cup B_i\e}\right)$,

\item $S_i^{\eps,+}=\left\{x\in\mathbb{R}^n:\ |x-x^{i,\eps}|=R\e\right\},$

\item $S_i^{\eps,-}=\left\{x\in\mathbb{R}^n:\ |x-x^{i,\eps}|=R\e-d\e\right\},$

\item $Y_i\e=\left\{x=(x_1,\dots,x_n)\in\mathbb{R}^n:\ |x_k-(x^{i,\eps})_k|<{\eps\over 2},\ k=1,\dots ,n\right\}$, where $(x^{i,\eps})_k$ is the $k$-th coordinate of $x^{i,\eps}$,

\item $\Gamma_i\e=Y_i\e\cap\Gamma.$

\end{itemize}

One has
\begin{gather}\label{disttogamma1}\cupl_{i\in\I}Y_i\e\subset\Omega,\\
\label{disttogamma2}
\cupl_{i\in\I}\Gamma_i\e\subset\Gamma,\quad \liml_{\eps\to 0}\left|\Gamma\setminus \cupl_{i\in\I}\Gamma_i\e\right|=0
\end{gather}
(recall that  the set $\I$ consists of $i\in \mathbb{Z}^{n-1}$  satisfying  $x^{i,\eps}\in\Gamma$ and $\mathrm{dist}(x^{i,\eps},\partial\Omega\setminus\Gamma)\geq \eps {\sqrt{n}\over 2}$, whence one can easily obtain \eqref{disttogamma1}-\eqref{disttogamma2}). 

By $u_1\e,u_2\e,\dots,u_k\e\dots$ we denote a sequence of eigenfunctions of $\mathcal{A}\e$ 
corresponding to the non-decreasing sequence 
$\{\lambda\e_k\}_{k\in\mathbb{N}}$ of eigenvaluesб
and normalized by the condition
$$(u_k\e,u_l\e)_{\mathcal{H}\e}=\delta_{kl}.$$

The following lemmata will be frequently used throughout the proof.

\begin{lemma}\label{lemma_est}
One has the following estimates: $\forall u\in H^1(Y_i\e)$
\begin{gather}\label{aa1}
\left|\langle u\rangle_{S_i^{\eps,+}}-\langle u\rangle_{\Gamma_i\e}\right|^2\leq C\eps^{2-n}\|\nabla u\|^2_{Y_i\e},\\\label{aa2}
\left|\langle u\rangle_{S_i^{\eps,-}}-\langle u\rangle_{B_i\e}\right|^2\leq C\eps^{2-n}\|\nabla u\|^2_{B_i\e}.
\end{gather}
\end{lemma}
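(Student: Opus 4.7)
The plan is a standard scaling argument reducing both estimates to a single fixed reference configuration, where the inequalities become $\eps$-independent trace/Poincaré estimates.

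For the first estimate, I would fix $i \in \I$ and rescale by $y = (x - x^{i,\eps})/\eps$, setting $\tilde u(y) := u(x^{i,\eps} + \eps y)$. Then $Y_i\e$ maps to the unit cube $Y = (-1/2,1/2)^n$, $\Gamma_i\e$ maps to the flat cross-section $\Gamma_Y = Y \cap \{y_n = 0\}$, and $S_i^{\eps,+}$ maps to the sphere $S_+$ of radius $R$ centred at the origin. All three of these reference sets are independent of $\eps$. The mean values $\langle u\rangle_{S_i^{\eps,+}}$, $\langle u\rangle_{\Gamma_i\e}$ are invariant under the rescaling, while $\|\nabla u\|^2_{Y_i\e} = \eps^{n-2}\|\nabla\tilde u\|^2_Y$. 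So it suffices to prove the $\eps$-free estimate
\begin{gather*}
\left|\langle\tilde u\rangle_{S_+} - \langle\tilde u\rangle_{\Gamma_Y}\right|^2 \leq C\,\|\nabla\tilde u\|^2_Y,\qquad \tilde u\in H^1(Y).
\end{gather*}
This is a standard fact: write $\tilde u = \langle\tilde u\rangle_Y + w$ with $w$ of zero mean; both mean-value functionals $\tilde u\mapsto \langle\tilde u\rangle_{S_+}$ and $\tilde u\mapsto\langle\tilde u\rangle_{\Gamma_Y}$ are continuous on $H^1(Y)$ by the trace theorem (using that $\Gamma_Y$ is a smooth interior hypersurface and $S_+$ is a smooth sphere inside $Y$), and they annihilate the constant part, so the difference is bounded by $C\|w\|_{H^1(Y)}$, which in turn is bounded by $C\|\nabla\tilde u\|_{L^2(Y)}$ via the Poincaré inequality on $Y$.

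For the second estimate I would perform an analogous rescaling adapted to the ball $B_i\e$: set $\rho\e = R\e - d\e$ and change variables by $y = (x - x^{i,\eps})/\rho\e$, sending $B_i\e$ to the unit ball $B_1$ and $S_i^{\eps,-}$ to the unit sphere $\partial B_1$. Now the mean values are preserved and $\|\nabla u\|^2_{B_i\e} = (\rho\e)^{n-2}\|\nabla\tilde u\|^2_{B_1}$, so the claim reduces to
\begin{gather*}
\left|\langle\tilde u\rangle_{\partial B_1} - \langle\tilde u\rangle_{B_1}\right|^2 \leq C\,\|\nabla\tilde u\|^2_{B_1},\qquad \tilde u\in H^1(B_1),
\end{gather*}
again a Poincaré--trace estimate on the fixed unit ball. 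Multiplying back gives the bound $C(\rho\e)^{2-n}\|\nabla u\|^2_{B_i\e}$, and since $d\e = o(\eps)$ we have $\rho\e = R\eps(1 + o(1))$, so $(\rho\e)^{2-n} \leq C\eps^{2-n}$ for $\eps$ small, yielding the required estimate.

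The only point requiring mild care, and the one I expect to be the main (though modest) obstacle, is the justification of the reference inequality on $Y$ for the first estimate: one must confirm that $\Gamma_Y$, being an interior cross-section rather than part of $\partial Y$, still admits a continuous trace map from $H^1(Y)$. This follows from the usual interior trace/trace-on-hyperplane theorem (e.g. by reflecting $Y$ across $\Gamma_Y$ and applying the boundary trace theorem on each half-cube), and the analogous point for $S_+$ is standard since $S_+ \subset Y$ is a smooth compact hypersurface. No other issues arise; all constants are purely geometric and independent of $\eps$, $u$, and $i$.
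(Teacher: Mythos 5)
Your proof is correct and uses the same ingredients the paper relies on --- rescaling to an $\eps$-independent reference configuration, the trace theorem, and the Poincar\'e inequality on the reference domain. The only visible difference is in \eqref{aa1}, where the paper routes through the intermediate mean $\langle u\rangle_{Y_i\e}$ via the triangle inequality, whereas you compare the two mean-value functionals directly as bounded functionals on $H^1$ that annihilate constants; this is purely a difference of packaging, not of method.
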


\begin{proof} 
Using a standard trace inequality and rescaling arguments one can easily obtain the following inequality:
\begin{gather}\label{tr1}
\|v\|^2_{L^2(S_i^{\eps,-})}\leq C\left(\eps^{-1} \|v\|^2_{L^2(B_i^{\eps})}+\eps \|\nabla v\|^2_{L^2(B_i^{\eps})}\right),\ \forall v\in H^1(B_i\e).
\end{gather}

We set $v:=u-\langle u\rangle_{B_i\e}$. Using \eqref{tr1}, the Cauchy-Schwarz inequality 
and the Poincar\'e inequality  
$$\|u-\langle u\rangle_{B_i\e}\|^2_{L^2(B_i^{\eps})}\leq C\eps^{2}\|\nabla u\|^2_{L^2(B_i^{\eps})},$$
we obtain:
\begin{multline*}
\left|\langle u\rangle_{S_i^{\eps,-}}-\langle u\rangle_{B_i\e}\right|^2=\left|\langle v\rangle_{S_i^{\eps,-}}\right|^2\leq {1\over |S_i^{\eps,-}|}\|v\|^2_{L^2(S_i^{\eps,-})}\\\leq
C\left(\eps^{-n} \|u-\langle u\rangle_{B_i\e}\|^2_{L^2(B_i^{\eps})}+\eps^{2-n} \|\nabla u\|^2_{L^2(B_i^{\eps})}\right)\leq C_1\eps^{2-n}\|\nabla u\|^2_{L^2(B_i^{\eps})}
\end{multline*}
and \eqref{aa2} is proved.

In the same way we prove the estimates
\begin{gather}\notag
\left|\langle u\rangle_{S_i^{\eps,+}}-\langle u\rangle_{Y_i\e}\right|^2\leq C\eps^{2-n}\|\nabla u\|^2_{Y_i\e},\\\label{GammaY}
\left|\langle u\rangle_{\Gamma_i^{\eps}}-\langle u\rangle_{Y_i\e}\right|^2\leq C\eps^{2-n}\|\nabla u\|^2_{Y_i\e},
\end{gather}
which gives \eqref{aa1}. The lemma is proved.
\end{proof}

\begin{lemma}\label{lemma_pm}
One has the following inequality:  $\forall u\in H^1(D_i\e)$ 
\begin{gather}\label{ineq_pm}
\left|\langle u\rangle_{S_i^{\eps,+}}-\langle u\rangle_{S_i^{\eps,-}}\right|^2\leq C d\e\eps^{1-n}\|\nabla u\|_{L^2(D_i\e)}^2.
\end{gather}
\end{lemma}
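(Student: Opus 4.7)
The plan is to work in spherical coordinates $(r,\omega)$ centered at $x^{i,\eps}$, so that $D_i\e$ corresponds to $r\in (R\e-d\e,R\e)$, $\omega\in S^{n-1}$, and the shell integral reads $\d x = r^{n-1}\d r\d\omega$. The key observation is that, because $(R\e)^{n-1}$ and $(R\e-d\e)^{n-1}$ both equal $(R\eps)^{n-1}(1+o(1))$ (using $d\e=o(\eps)$), the normalized surface measures on $S_i^{\eps,\pm}$ reduce to the uniform measure on the unit sphere up to the same factor, so
\[
\langle u\rangle_{S_i^{\eps,+}}-\langle u\rangle_{S_i^{\eps,-}}
=\frac{1}{\omega_{n-1}}\int_{S^{n-1}}\bigl(u(R\e,\omega)-u(R\e-d\e,\omega)\bigr)\d\omega.
\]

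Next, for each fixed $\omega$, the fundamental theorem of calculus gives $u(R\e,\omega)-u(R\e-d\e,\omega)=\int_{R\e-d\e}^{R\e}\partial_r u(r,\omega)\d r$, and a Cauchy--Schwarz in $r$ yields
\[
\bigl|u(R\e,\omega)-u(R\e-d\e,\omega)\bigr|^2\le d\e\intl_{R\e-d\e}^{R\e}|\partial_r u(r,\omega)|^2\d r.
\]
Applying Cauchy--Schwarz once more to the integral over $S^{n-1}$ (bounding $|\int f|^2\le |S^{n-1}|\int|f|^2$) gives
\[
\left|\langle u\rangle_{S_i^{\eps,+}}-\langle u\rangle_{S_i^{\eps,-}}\right|^2
\le \frac{d\e}{\omega_{n-1}}\intl_{S^{n-1}}\intl_{R\e-d\e}^{R\e}|\partial_r u(r,\omega)|^2\d r\d\omega.
\]

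Finally, to relate the inner integral to $\|\nabla u\|^2_{L^2(D_i\e)}$, I use that $|\partial_r u|^2\le|\nabla u|^2$ and that in the shell $r\ge R\e-d\e\ge c\eps$ for small $\eps$, hence $r^{n-1}\ge c'\eps^{n-1}$. Therefore
\[
\intl_{D_i\e}|\nabla u|^2\d x\ge c'\eps^{n-1}\intl_{S^{n-1}}\intl_{R\e-d\e}^{R\e}|\partial_r u(r,\omega)|^2\d r\d\omega,
\]
which, combined with the previous display, delivers the desired estimate with $C$ independent of $\eps$.

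The whole argument is short and essentially a 1D trace--type inequality; the only subtle point is verifying that the Jacobian factors $r^{n-1}$ on the two spheres differ only by a factor $1+o(1)$ so that they cancel in the normalized averages, which uses $d\e=o(\eps)$. I expect no serious obstacle here.
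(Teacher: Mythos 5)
Your argument is correct and follows essentially the same route as the paper: spherical coordinates centered at $x^{i,\eps}$, fundamental theorem of calculus in the radial direction, Cauchy--Schwarz, and then a bound on the weight. The only structural difference is that you apply Cauchy--Schwarz twice (once in $r$, once in $\omega$) and then use the crude uniform bound $\rho^{n-1}\geq c\,\eps^{n-1}$ on the shell to insert the Jacobian, whereas the paper does a single weighted Cauchy--Schwarz with weight $\rho^{(n-1)/2}$ and then explicitly evaluates $\int_{R\e-d\e}^{R\e}\rho^{1-n}\,\d\rho\leq C d\e\eps^{1-n}$. Both produce the same estimate; yours is marginally more elementary.

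One small but worth-correcting point: your remark that the Jacobian factors on the two spheres ``differ only by a factor $1+o(1)$'' and that this is ``the only subtle point'' is both unnecessary and slightly misleading. The normalized average over the sphere of radius $\rho$ is \emph{exactly}
\[
\langle u\rangle_{\{|x-x^{i,\eps}|=\rho\}}=\frac{1}{\rho^{n-1}\omega_{n-1}}\intl_{S^{n-1}}u(\rho\omega)\,\rho^{n-1}\,\d\omega=\frac{1}{\omega_{n-1}}\intl_{S^{n-1}}u(\rho\omega)\,\d\omega,
\]
so the $\rho^{n-1}$ cancels identically; no asymptotics and no use of $d\e=o(\eps)$ is needed for the identity you write. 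The condition $d\e=o(\eps)$ enters only in the lower bound $R\e-d\e\geq c\eps$ for the Jacobian (and, in the paper's version, in simplifying the explicit integral), not in the reduction to averages over the unit sphere.
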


\begin{proof}
By   density arguments it is enough to prove this lemma only for smooth functions.

We introduce in $D_{i}\e$ the spherical coordinates $(\rho,\Theta)$,
where $\rho\in (R\e-d\e,R\e)$ is the distance to $x^{i,\eps}$, $\Theta$ are the angle
coordinates. By $\mathrm{S}_{n-1}$ we denote the
$(n-1)$-dimensional unit sphere, by $\d\Theta$ we denote the
Riemannian measure on $\mathrm{S}_{n-1}$. One has
$$u(R\e,\Theta)-u(R\e-d\e,\Theta)=\intl_{R\e-d\e}^{R\e} {\partial
u\over\partial \rho}(\rho,\Theta)\d \rho.$$ We integrate this equality over $\mathrm{S}_{n-1}$ (with respect to $\Theta$),
divide by $|\mathrm{S}_{n-1}|$ and square. Using the Cauchy-Schwarz inequality and taking into account that $d\e=o(\eps)$, $R\e=R\eps$, we
obtain
\begin{multline*}
\left|\langle u\rangle_{S_{i}^{\eps,+}}-\langle u\rangle_{S_{i}^{\eps,-}}\right|^2=
\left|{1\over |\mathrm{S}_{n-1}|}\intl_{\mathrm{S}_{n-1}}
\intl_{R\e-d\e}^{R\e} {\partial
u\over\partial \rho}(\rho,\Theta)\d \rho \d\Theta\right|^2\\\leq
C \left(\intl_{\mathrm{S}_{n-1}}\intl_{R\e-d\e}^{R\e}\left|
{\partial u\over\partial \rho}(\rho,\Theta)\right|^2
\rho^{n-1}\d \rho \d\Theta\right)\cdot\left(
\intl_{R\e-d\e}^{R\e} {\d \rho\over \rho^{n-1}}\right)\\\leq
C_1\ds\left({1\over (R\e-d\e)^{n-2}}-{1\over (R\e)^{n-2}}\right)\|\nabla u\|^2_{L^2( D_{i}\e)}\leq 
C_2{d\e\eps^{1-n}}\|\nabla u\e\|^2_{L^2(D_i\e)}.
\end{multline*}
The lemma is proved.
\end{proof}

\subsection{\label{subsec32}Proof of Theorem \ref{th1}: the case $q<\infty$}

Recall that  the claim of Theorem \ref{th1} is equivalent
to the fulfilment of properties \eqref{ha}-\eqref{hb} (see Remark \ref{remark2}).

\subsubsection{Proof of property \eqref{ha}} 

Let $\lambda\e\in\sigma(\mathcal{A}\e)$ and  $\lambda\e\to\lambda$ as $\eps\to 0$. We have to prove that $\lambda\in\sigma(\mathcal{A}_{q,r})$.

We denote by $k(\eps)$ the index corresponding to $\lambda\e$ (i.e., $\lambda\e=\lambda_{k(\eps)}\e$).
By $u\e=u\e_{k(\eps)}\in H^1_0(\Omega)$ we denote the corresponding eigenfunction. One has
\begin{gather}\label{normalization1}
1=\|u\e\|_{\mathcal{H}\e}=
\|u\e\|^2_{L^2(\Omega\e)}+
\suml_{i\in\mathcal{I}\e}\|u\e\|^2_{L^2(D_i\e)}+
\b\e \suml_{i\in\mathcal{I}\e}\|\nabla u\e\|^2_{L^2(B_i\e)},
\\\label{normalization2} \lambda\e=\eta\e[u\e,u\e]=
\|\nabla u\e\|^2_{L^2(\Omega\e)}+
\a\e  \suml_{i\in\mathcal{I}\e}\|\nabla u\e\|^2_{L^2(D_i\e)}+
 \suml_{i\in\mathcal{I}\e}\| u\e\|^2_{L^2(B_i\e)}.
\end{gather}

In order to describe the behavior of $u\e$ as $\eps\to 0$ we need some additional operators.
It is known (see, e.g., \cite[Chapter 4, \S 4.2]{March}) that there exists an extension operator $\Pi_1\e:H^1(\Omega\e)\to H^1(\Omega)$ uniformly bounded with respect to $\eps$, i.e. the following conditions hold:
\begin{gather}\label{Pi1ineq}
\begin{array}{l}
[\Pi_1\e u](x)=u(x),\ \forall x\in \Omega\e,\\
\|\Pi\e u\|_{H^1(\Omega)}\leq C\|u\|_{H^1(\Omega\e)},
\end{array}
\end{gather}
where the constant $C$ is independent of both $u$ and $\eps$.

Also we introduce the operator $\Pi_2\e:L^2(\cupl_i B_i\e)\to L^2(\Gamma)$:
\begin{gather*}
\Pi_2\e u(x)=
\begin{cases}
\langle u \rangle_{B_i\e}\sqrt{r\e},& x\in \Gamma^{\eps}_i,\\ 0,& x\in\Gamma\backslash\cupl_i
\Gamma^{\eps}_i.
\end{cases}
\end{gather*}
Using the Cauchy-Schwarz inequality and the definition \eqref{ass4} of $r\e$ we obtain 
\begin{gather}
\label{Pi2ineq} \|\Pi_2\e u\|^2_{L^2(\Gamma)}\leq r\e\suml_{i\in\mathcal{I}\e} {|\Gamma_i\e|\over|B_i\e|}
\intl_{B_i\e}|u(x)|^2\d x\leq C\suml_{i\in\mathcal{I}\e}\intl_{B_i\e}\b\e |u(x)|^2\d x\leq C_1\|u\|^2_{\mathcal{H}\e}.
\end{gather}

\begin{remark}\label{remark_q}
As we already mentioned in the introduction, the parameter $q$ characterizes the ``strength'' of coupling between $\Omega\e$ and the union of the balls $B_i\e$. If $q=\infty$ the coupling is strong:
given a family $\left\{w\e\in H^1(\Omega)\right\}_{\eps>0}$ satisfying 
\begin{gather}\label{grad}
\eta\e[w\e,w\e]\leq C
\end{gather}
one can observe that the behaviour as 
$\eps\to 
0$ of $w\e$ on $\cupl_i 
B_i\e$ is 
determined by the one in $\Omega\e$. Namely (cf. 
\eqref{u1u2}),
$$\liml_{\eps\to 0}\left\|\sqrt{r\e}\hspace{1mm} (\Pi_1\e w\e)|_\Gamma - \Pi_2\e 
w\e\right\|_{L^2(\Gamma)}=0.$$
On the other hand, if $q$ is finite then the coupling is weak: 
for an arbitrary smooth functions $w_1$, $w_2$ on $\Gamma$ 
there exists a family 
$\left\{w\e\in H^1(\Omega)\right\}_{\eps>0}$ satisfying \eqref{grad} and
$$ 
(\Pi_1 w\e)|_{\Gamma}\to w_1,\ \Pi_2\e w\e\to w_2\text{ in }L^2(\Gamma)\text{ as }\eps\to 0.$$
The function $w\e$ can be constructed, for example, by the formula \eqref{we} below.
\end{remark}

In view of \eqref{normalization1}-\eqref{Pi2ineq} the functions 
$\Pi_1\e u\e$ and $\Pi_2\e u\e$ are bounded in $H^1(\Omega)$ and $L^2(\Gamma)$, respectively, uniformly in $\eps$. Then, using the Rellich embedding theorem and the trace theorem, we conclude that there is a subsequence (for convenience, still denoted by $u_\eps$) and $u_1\in H^1(\Omega)$, $u_2\in L^2(\Gamma)$ such that
\begin{gather}\label{Pi1conv1}
\Pi_1\e u\e\underset{\eps\to 0}\rightharpoonup u_1\text{ in }H^1(\Omega),\\\label{Pi1conv2}
\Pi_1\e u\e\underset{\eps\to 0}\rightarrow u_1\text{ in }L^2(\Omega),
\\\label{Pi1conv3}
\Pi_1\e u\e\underset{\eps\to 0}\rightarrow u_1\text{ in }L^2(\Gamma).
\\\label{Pi2conv}
\Pi_2\e u\e\underset{\eps\to 0}\rightharpoonup u_2\text{ in }L^2(\Gamma).
\end{gather}
(here we use the same notation for the functions $u\e$, $u_1$ and their traces on $\Gamma$).
It is clear that $\Pi_1\e u\e=0$ on $\partial\Omega$, whence using the trace theorem we arrive at $u_1=0$ on ${\partial\Omega}$, i.e. $u_1\in H^1_0(\Omega)$.
\smallskip

Now we consider separately two cases: $u_1\not=0$ and $u_1=0$. 
\medskip

\textbf{Case 1:} $u_1\not=0$. 
We will prove prove that in this case $\lambda$ is an eigenvalue of the operator $\mathcal{A}_{q,r}$ if $r>0$ (respectively, of the operator $\mathcal{A}_{q,0}$ if $r=0$)
and $U=(u_1,r^{-1/2}u_2)$ (respectively, $U=(u_1,u_2)$) is a corresponding eigenfunction. 
\smallskip 

For an arbitrary $w\in H^1_0(\Omega)$ one has
\begin{gather}\label{int_eq}
\intl_{\Omega} a\e(x)\nabla u\e(x)\cdot \nabla w(x)\hspace{1px}  \d x =\lambda\e
\intl_{\Omega}b\e(x) u\e(x) w(x)  \d x.
\end{gather}
Our strategy of proof will be to plug into \eqref{int_eq} some specially chosen test-function $w$ depending on $\eps$ and then pass to the limit as $\eps\to 0$  in order to obtain either the equality $\mathcal{A}_{q,r} U=\lambda U$ ($r>0$) or the equality $\mathcal{A}_{q,0} U=\lambda U$ ($r=0$)  written in a weak form. 

For constructing this special test-function we introduce several additional functions. Let $\Phi:\mathbb{R}\to\mathbb{R}$ be a smooth function such that $\Phi(\rho)=1$ for $\rho\leq 1$ and $\Phi(\rho)=0$ for $\rho\geq 2$. For $i\in \mathcal{I}\e$ we denote
\begin{gather}\label{phi_i}
\phi_i^{\eps}(x)=\Phi\left({|x-x^{i,\eps}|+{\eps\over 2}-2R\e\over{\eps\over 2}-R\e}\right).
\end{gather}
It is clear that 
\begin{gather}\label{phi1}
\phi_i^{\eps}=0\text{ in }\mathbb{R}^n\setminus \cupl_{\in\I}{Y_i\e},\quad \phi_i\e=1\text{ in }\overline{D_i\e\cup B_i\e},\\\label{phi2} \mathrm{supp}(D^m\phi_i\e)\subset \overline{Y_i\e}\setminus \left(D_i\e\cup B_i\e\right)\ (m\not =0),\\ |D^m\phi_i\e|\leq {C\over \eps^{|m|}},\ m=1,2,3,\dots 
\label{phi3}
\end{gather}

By $v_i\e(x)$ we denote the following function:
\begin{gather}\label{v}
v_i\e(x)=
\begin{cases}
1,& |x-x^{i,\eps}|\leq{R\e-d\e},\\
{A\e\over  |x-x^{i,\eps}|^{n-2}}+B\e,& {R\e-d\e}< |x-x^{i,\eps}|< {R\e},\\
0,& {R\e}\leq |x-x^{i,\eps}|,
\end{cases}
\end{gather}
where
\begin{gather}\label{A}
\begin{matrix}
A\e=\ds\left({1\over (R\e-d\e)^{n-2}}-{1\over (R\e)^{n-2}}\right)^{-1},\quad B\e=-{A\e\over (R\e)^{n-2}}.
\end{matrix}
\end{gather}
Taking into account that $d\e=o(\eps)$ one gets the following asymptotics as $\eps\to 0$:
\begin{gather}\label{Ae}
A\e\sim {(R\e)^{n-1}\over (n-2)d\e}.
\end{gather}

It is easy to see that $v_i\e$, $i\in\I$, are continuous and piecewise smooth functions.
Using \eqref{ass2}-\eqref{ass4}, \eqref{Ae} one can easily obtain the following asymptotics as $\eps\to 0$:
\begin{gather}\label{v4} \intl_{Y_i\e}a\e|\nabla v_i\e|^2\d x=\a\e (A\e)^2 (n-2) \left({1\over (R\e-d\e)^{n-2}}-{1\over (R\e)^{n-2}}\right)\omega_{n-1} 
\sim q\e r\e\eps^{n-1},\\\label{v5} \intl_{Y_i\e}|v_i\e|^2b\e \d x=\b\e |B_i\e|+\mathcal{O}(d\e\eps^{n-1})\sim r\e \eps^{n-1}.
\end{gather}

Finally, taking arbitrary functions $w_1\in C_0^\infty(\Omega)$, $w_2\in C^\infty(\Gamma)$, we construct the following test function:
\begin{gather}\label{we}
w\e(x)=w_1(x)+\suml_{i\in\I}\left(w_1(x^{i,\eps})-w_1(x)\right)\phi_i\e(x)+\suml_{i\in\I}v_i\e(x)
\left({w_2(x^{i,\eps})\over\sqrt{r\e}}-w_1(x^{i,\eps})\right).
\end{gather}
It is clear that $w\e\in H^1_0(\Omega)$.

We plug $w=w\e(x)$  into \eqref{int_eq}. This gives, using \eqref{phi1}-\eqref{phi2},
\begin{multline}\label{Iomega}
\underset{\mathbf{I_1}}{\underbrace{\intl_{\Omega\e}\nabla u\e\cdot \nabla w_1 \d x}}+\underset{\mathbf{I_2}}{\underbrace{\suml_{i\in\I}\intl_{Y_i\e\setminus\overline{ D_i\e\cup B_i\e}}\nabla u\e\cdot\nabla \left(\left(w_1(x^{i,\eps})-w_1\right)\phi_i\e\right)\d x}}\\
+\underset{\mathbf{I_3}}{\underbrace{\suml_{i\in\I}\left({w_2(x^{i,\eps})\over\sqrt{r\e}}-w_1(x^{i,\eps})\right)\intl_{D_i\e}\a\e\nabla u\e\cdot\nabla v_i\e\vspace{1px} \d x}}\\=\lambda\e\Bigg(\underset{\mathbf{I_4}}{\underbrace{\intl_{\Omega\e}u\e w_1 \d x}}+\underset{\mathbf{I_5}}{\underbrace{\suml_{i\in\I}\intl_{Y_i\e\setminus\overline{ D_i\e\cup B_i\e}} u\e \left(w_1(x^{i,\eps})-w_1\right)\phi_i\e \d x}}\\+\underset{\mathbf{I_6}}{\underbrace{\suml_{i\in\I}\intl_{D_i\e}u\e w\e \d x}}+\underset{\mathbf{I_7}}{\underbrace{\suml_{i\in\I}\intl_{B_i\e}\beta\e u\e {w_2(x^{i,\eps})\over\sqrt{r\e}}\d x}}\Bigg).
\end{multline}

Let us study step-by-step the terms $\mathbf{I_j},\ \mathbf{j}=1,\dots,7$.\medskip

\noindent$\textbf{1)}$ Since $\liml_{\eps\to 0}|\Omega\setminus\Omega\e|=0$ and $\|\nabla\Pi\e u\e\|_{L^2(\Omega\setminus\Omega\e)}\leq C$, 
we obtain, using \eqref{Pi1conv1},
\begin{gather}\label{Iomega1}
\mathbf{I_1}=\intl_{\Omega}\nabla(\Pi_1\e u\e) \cdot\nabla w_1 \d x-\intl_{\Omega\setminus\Omega\e}\nabla(\Pi_1\e u\e) \cdot\nabla w_1 \d x\underset{\eps\to 0}\to \intl_{\Omega}\nabla u_1\cdot\nabla w_1 \d x.
\end{gather}\medskip

\noindent$\textbf{2)}$ Using the estimates $$\left|\nabla \left(\left(w_1(x^{i,\eps})-w_1\right)\phi_i\e\right)\right|\leq C,\quad \suml_{i\in\mathcal{I}\e}\eps^{n-1}=\suml_{i\in\mathcal{I}\e}|\Gamma_i\e|\leq |\Gamma|$$ 
(the first one follows easily from \eqref{phi1}-\eqref{phi3}) 
and taking into account that $\|\nabla u\e\|_{L^2(\Omega\e)}\leq C$
we conclude that  
\begin{gather}\label{Iomega2}
\left|\mathbf{I_2}\right|^2\leq C\|\nabla u\e\|^2_{L^2(\Omega\e)}\left|\cupl_{i\in\mathcal{I}\e} Y_i\e\right|\leq C_1\suml_{i\in\mathcal{I}\e}\eps^{n}\leq C_2\eps.
\end{gather}\medskip

\noindent$\textbf{3)}$ Integrating by parts and taking into account that $\Delta v_i\e=0$ in $D_i\e$  we get:
\begin{multline}\label{Iomega3}
\mathbf{I_3}=
\suml_{i\in\I}\left({w_2(x^{i,\eps})\over\sqrt{r\e}}-w_1(x^{i,\eps})\right)\a\e\left(\intl_{S^{\eps,+}_i}{\partial v_i\e\over\partial |x-x^{i,\eps}|}u\e \d s-\intl_{S^{\eps,-}_i}{\partial v_i\e\over\partial |x-x^{i,\eps}|}u\e \d s\right)\\=
{\a\e A\e \omega_{n-1}(n-2)}\suml_{i\in\I}\left(w_1(x^{i,\eps})-{w_2(x^{i,\eps})\over\sqrt{r\e}}\right)\left(\langle u\e\rangle_{S_i^{\eps,+}} - \langle u\e\rangle_{S_i^{\eps,-}}\right)
\end{multline}
Recall that by $\omega_{n-1}$ we denote the volume of the $(n-1)$-dimensional unit sphere, and $A\e$ is defined by \eqref{A}.

We introduce the operator $Q\e: C^1(\Gamma)\to L^2(\Gamma)$ by  
\begin{gather}\label{Qdef}
Q\e w=\begin{cases}
w(x^{i,\eps}),& x\in \Gamma^{\eps}_i,\\ 0,& x\in\Gamma\backslash\cupl_{i\in\I}
\Gamma^{\eps}_i.
\end{cases}
\end{gather}
It is easy to see that
\begin{gather}\label{Q}
\forall w\in C^1(\Gamma):\ Q\e w\underset{\eps\to 0}\to w\text{ in }L^2(\Gamma).
\end{gather}

We obtain from \eqref{Iomega3}:
\begin{multline}\label{Iomega4}
\mathbf{I_3}={\a\e A\e\omega_{n-1}(n-2)}
\suml_{i\in\I}\left(w_1(x^{i,\eps})-{w_2(x^{i,\eps})\over \sqrt{r\e}})\right)\left(\langle \Pi_1\e u\e\rangle_{\Gamma_i^{\eps}} - \langle u\e\rangle_{B_i^{\eps}}\right)+\delta(\eps)\\=
{\a\e A\e\omega_{n-1}(n-2)\over\eps^{n-1}}\intl_{\Gamma}\left(Q\e w_1-  {1\over\sqrt{r\e}}Q\e w_2\right)\left(\Pi_1\e u\e-{1\over\sqrt{r\e}}\Pi_2\e u\e\right)\d s+\delta(\eps),
\end{multline}
where the remainder $\delta(\eps)$ vanishes as $\eps\to 0$. Namely, applying \eqref{aa1} for $u:=\Pi\e_1 u\e$ and \eqref{aa2} for $u:=u\e$ and using the asymptotics
\begin{gather}\label{Ae+}
{\a\e A\e\omega_{n-1}(n-2)\over\eps^{n-1}}\sim q\e r\e\text{ as }\eps\to 0
\end{gather}
following from \eqref{Ae},  we obtain (below by $N(\eps)$ we denote the cardinality of
of the set $\mathcal{I}\e$; clearly, $N(\eps)$ satisfies \eqref{total}):
\begin{multline*}
|\delta(\eps)|^2\leq 
C\left({a\e A\e\over \sqrt{r\e}}\right)^2 N(\eps) \suml_{i\in\mathcal{I}\e}\left(\left|\langle u\e\rangle_{S_i^{\eps,+}}-\langle \Pi_1\e u\e\rangle_{\Gamma_i\e}\right|^2+
\left|\langle u\e\rangle_{S_i^{\eps,-}}-\langle u\e\rangle_{B_i\e}\right|^2\right)\\\leq C_1\eps (q\e)^2 r\e\suml_{i\in\mathcal{I}\e}\left(\|\nabla\Pi_1\e u\e\|^2_{L^2(Y_i\e)}+\|\nabla u\e\|^2_{L^2(B_i\e)}\right)
\leq C_2\eps.
\end{multline*}
Thus we obtain, using \eqref{Pi1conv3}, \eqref{Pi2conv}, \eqref{Q}-\eqref{Ae+}
\begin{multline}\label{Iomega5}
\mathbf{I_3}=
{\a\e A\e\omega_{n-1}(n-2)\over\eps^{n-1}}\intl_{\Gamma}\left(Q\e w_1-  {1\over\sqrt{r\e}}Q\e w_2\right) \left(\Pi_1\e u\e-{1\over\sqrt{r\e}}\Pi_2\e u\e\right) \d s+o(1)\\\to
\intl_{\Gamma}\left(qr u_1 w_1-q\sqrt{r}u_1 w_2-q\sqrt{r} u_2 w_1+ q u_2 w_2 \right)\d s\text{ as }\eps\to 0.
\end{multline}
\medskip

\noindent$\textbf{4)}$ Similarly to $\mathbf{I_1}$ we arrive at
\begin{gather}\label{Jomega1}\mathbf{I_4}\to \intl_{\Omega}u_1 w_1 \d x\text{ as }\eps\to 0
\end{gather}
\medskip

\noindent$\textbf{5),\ 6)}$ In view of \eqref{ass2} and since $\left| \left(w_1(x^{i,\eps})-w_1\right)\phi_i\e\right|<C\eps$ 
and $|w\e|<{C\over\sqrt{r\e}}$ in $\cupl_{i\in \I}D_i\e$ we obtain
\begin{multline}\label{Jomega2}
\left|\mathbf{I_5}+\mathbf{I_6}\right|^2 \leq C\suml_{i\in\I}\left(\eps^2|Y_i\e\setminus (D_i\e\cup B_i\e)|+{|D_i\e|\over r\e}\right)\suml_{i\in\I}\|u\e\|^2_{L^2(Y_i\e\setminus B_i\e)}\leq C_1\suml_{i\in\I}
\left(\eps^{n+2}+{d\e\eps^{n-1}\over r\e}\right)\\\leq
C_2\left(\eps^3+{\a\e\over (d\e)^2}q\e\right)\to 0\text{ as }\eps\to 0.
\end{multline}\medskip

\noindent$\textbf{7)}$ Using the equality $|B_i\e|=(R\e-d\e)^n\varkappa_n\sim R^{n}\varkappa_n\eps^n$ (recall that by $\varkappa_n$ we denote the volume of $n$-dimensional unit ball) we get:
\begin{gather}\label{Jomega3}
\mathbf{I_7}={\b\e|B_i\e|\over\eps^{n-1}}\suml_{i\in\I}\langle u\e\rangle_{B_i\e}{w_2(x^{i,\eps})\over \sqrt{r\e}}\eps^{n-1}={\b\e |B_i\e|\over\eps^{n-1} r\e}\intl_\Gamma \Pi_2\e u\e Q\e w_2 \d s\to \intl_{\Gamma}u_2 w_2 \d s.
\end{gather} 
\medskip

Combining \eqref{Iomega}-\eqref{Iomega2}, \eqref{Iomega5}-\eqref{Jomega3} we get
\begin{multline}\label{int_eq_final}
\intl_{\Omega}\nabla u_1\cdot \nabla w_1 \d x+\intl_\Gamma\left(qr u_1w_1-q\sqrt{r}u_1w_2-q\sqrt{r} u_2 w_1+ q u_2 w_2\right)\d s\\=\lambda\left(\intl_{\Omega}u_1 w_1 \d x+\intl_{\Gamma}u_2 w_2 \d s\right),\quad \forall w_1\in C_0^\infty(\Omega),w_2\in C^\infty(\Gamma).
\end{multline}
By  density arguments equality \eqref{int_eq_final} is valid for any arbitrary $w_1\in H_0^1(\Omega)$ and $w_2\in L^2(\Gamma)$.

If $r>0$ then \eqref{int_eq_final} is equivalent to the equality
\begin{gather*}
\eta_{q,r}[U,W]=\lambda(U,W)_{\mathcal{H}_r},\text{ where }U=(u_1,r^{-1/2}u_2),\ W=(w_1,r^{-1/2}w_2),
\end{gather*}
whence, obviously, 
\begin{gather*}
U\in\mathrm{dom}(\mathcal{A}_{q,r}),\ \mathcal{A}_{q,r}U=\lambda U.
\end{gather*}
Since $u_1\not=0$, $\lambda$ is therefore an eigenvalue of the operator $\mathcal{A}_{q,r}$. 

If $r=0$ then \eqref{int_eq_final} implies
\begin{gather*}
U=(u_1,u_2)\in\mathrm{dom}(\mathcal{A}_{q,0}),\ \mathcal{A}_{q,0}U=\lambda U, 
\end{gather*}
i.e. $\lambda$ is an eigenvalue of the operator $\mathcal{A}_{q,0}$.
\medskip

\textbf{Case 2:} $u_1=0$. 
We will prove that in this case
$\lambda=q$ (recall that for every $r\geq 0$ one has $q\in\sigma(\mathcal{A}_{q,r})$).
\smallskip

We express the eigenfunction $u\e$ in the form
\begin{gather}\label{u=v-g+d}
u\e=v\e-g\e+\delta\e,
\end{gather}
where
\begin{gather}\label{ve}
v\e=\suml_{i\in\I}\langle u\e\rangle_{B_i\e}v_i\e,\quad g\e=\suml_{k=1}^{k(\eps)-1}(v\e,u_k\e)_{\mathcal{H}\e}u_k\e,
\end{gather}
and $\delta\e$ s a remainder term. Here the functions $v_i\e$ are again defined by \eqref{v}-\eqref{A}. It is clear that $v\e\in H^1_0(\Omega)$ and $g\e\in\mathrm{dom}(\mathcal{A}\e)$. Also we note that
\begin{gather}\label{v_est7}
v\e=\langle u\e \rangle_{B_i\e}\text{ in }B_i\e,\quad v\e=0\text{ in }\Omega\e.
\end{gather}

At first we obtain some estimates for the eigenfunction $u\e$. 
For any $u\in H^1(Y_i\e)$ one has the estimate \cite[Lemma 4.3]{Khrab2}:
\begin{gather}\label{oldineq}
\|u\|^2_{D_i\e}\leq C\left((d\e)^2\|\nabla u\|^2_{L^2(D_i\e)}+\eps d\e\|\nabla u\|_{L^2(Y_i\e\setminus \overline{D_i\e\cup B_i\e})}^2+\eps^{-1}d\e\|u\|^2_{L^2(Y_i\e\setminus \overline{D_i\e\cup B_i\e})}\right),
\end{gather}
Recall that $d\e=o(\eps)$ and $(d\e)^2=o(\a\e)$. Using this and \eqref{normalization2}, we obtain from \eqref{oldineq}:
\begin{gather}\label{u_inD}
\liml_{\eps\to 0}\suml_{i\in\I}\|u\e\|^2_{L^2(D_i\e)}= 0.
\end{gather}

Finally, using the Poincar\'{e} inequality and \eqref{normalization2}, we obtain:
\begin{gather}\label{u_inB}
\suml_{i\in\I}\|u\e-\langle u\e \rangle_{B_i\e}\|^2_{L^2(B_i\e)}=\mathcal{O}(\eps^2)\text{ as }\eps\to 0.
\end{gather}
Since $\eps\b\e=\mathcal{O}(1)$ (in view of \eqref{ass4}),  \eqref{u_inB} implies
\begin{gather}\label{u_inB+}
\suml_{i\in\I}\beta\e\|u\e-\langle u\e \rangle_{B_i\e}\|^2_{L^2(B_i\e)}=\mathcal{O}(\eps)\text{ as }\eps\to 0.
\end{gather}

Using the fact that 
\begin{gather}\label{u=0}
\|u\e\|_{L^2(\Omega\e)}\leq  \|\Pi_1\e u\e\|_{L^2(\Omega)}\underset{\eps\to 0}\to \|u_1\|_{L^2(\Omega)}=0 
\end{gather}
and \eqref{normalization1}, \eqref{u_inD}, \eqref{u_inB+} 
we obtain
\begin{gather*}
1=\|u\e\|^2_{\mathcal{H}\e}=\suml_{i\in\I}\left|\langle u\e \rangle_{B_i\e}\right|^2 |B_i\e|\b\e+o(1)=\eps\beta\e{|B_i\e|\over\eps^{n}} \suml_{i\in\I}\left|\langle u\e \rangle_{B_i\e}\right|^2\eps^{n-1}+o(1)\quad (\eps\to 0),
\end{gather*}
whence, taking into account that $\eps\beta\e{|B_i\e|\over\eps^{n}}\sim
{\eps\b\e (R\e)^n \chi_n\over \eps^n}
\sim r\e$ (here we use $d\e=o(\eps)$), 
\begin{gather}\label{u_L2}
r\e\suml_{i\in\I}\left|\langle u\e \rangle_{B_i\e}\right|^2\eps^{n-1}\sim 1\text{ as }\eps\to 0.
\end{gather}

Using \eqref{ass2}-\eqref{ass4}, \eqref{v4}, \eqref{v5} and taking \eqref{u_L2} into account we obtain the following estimates:
\begin{gather}\label{v_est1}
\eta\e[v\e,v\e]=\suml_{i\in\I}\left(\intl_{Y_i\e}a\e(x)|\nabla v_i\e|^2 \d x\right) \left|\langle u\e \rangle_{B_i\e}\right|^2\sim q\text{ as }\eps\to 0,\\\label{v_est2}
\|v\e\|^2_{\mathcal{H}\e}=\suml_{i\in\I}\left(\intl_{Y_i\e}b\e(x)|v_i\e|^2 \d x\right) \left|\langle u\e \rangle_{B_i\e}\right|^2\sim 1\text{ as }\eps\to 0,\\\label{v_est6}
\suml_{i\in\I}\|v\e\|^2_{L^2(D_i\e)}\leq |D_i\e|\suml_{i\in\I}\left|\langle u\e \rangle_{B_i\e}\right|^2\leq C_1 q\e {(d\e)^2\over \a\e}  r\e\suml_{i\in\I}\left|\langle u\e \rangle_{B_i\e}\right|^2\eps^{n-1} \to 0\text{ as }\eps\to 0.
\end{gather}

From the Bessel inequality and the orthogonality of eigenfunctions (namely, $(u\e,u_k\e)_{\mathcal{H}\e}=0$ provided $k<k(\eps)$) we obtain the following estimates for the function $g\e$:
\begin{gather}\label{g1}
\|g\e\|^2_{\mathcal{H}\e}=\suml_{k=1}^{k(\eps)-1}\left|(v\e,u_k\e)_{\mathcal{H}\e}\right|^2=
\suml_{k=1}^{k(\eps)-1}\left|(v\e-u\e,u_k\e)_{\mathcal{H}\e}\right|^2\leq \|v\e-u\e\|^2_{\mathcal{H}\e},\\\label{g2}
\eta\e [g\e,g\e]=\suml_{k=1}^{k(\eps)-1} \lambda_k\e\left|(v\e, u_k\e)_{\mathcal{H}\e}\right|^2=
\suml_{k=1}^{k(\eps)-1}\lambda_k\e\left|(v\e-u\e,u_k\e)_{\mathcal{H}\e}\right|^2\leq \lambda\e\|v\e-u\e\|^2_{\mathcal{H}\e}.
\end{gather}
In view of  \eqref{v_est7}, \eqref{u_inD}, \eqref{u_inB+}, \eqref{u=0}, \eqref{v_est6} and the fact that $u_1=0$ one has
\begin{gather}
\label{u-v}
\|u\e-v\e\|^2_{\mathcal{H}\e}=\|u\e\|^2_{L^2(\Omega\e)}+\suml_{i\in\I}\|u\e-v\e\|^2_{L^2(D_i\e)}+
\suml_{i\in\I}\beta\e\|u\e-\langle u\e \rangle_{B_i\e}\|^2_{L^2(B_i\e)}\to 0\text{ as }\eps\to 0
\end{gather}
and therefore, by virtue of \eqref{g1}-\eqref{g2}, 
\begin{gather}\label{g_main}
\|g\e\|^2_{\mathcal{H}\e}+\eta\e [g\e,g\e]\to 0\text{ as }\eps\to 0.
\end{gather}

Now let us estimate the remainder $\delta\e$. 
We denote $\tilde v\e=v\e-g\e$. Since $\tilde v\e\in\{u_1\e,\dots,u_{k(\eps)-1}\e\}^\perp$, one has by the well-known variational characterization of eigenvalues (see, e.g., \cite{Reed}) 
\begin{gather*}
\eta\e[u\e,u\e]=\lambda\e=\min\left\{{\eta\e[u,u]\over \|u\|^2_{\mathcal{H}\e}};\ (u,u_k\e)_{\mathcal{H}\e}=0\text{ as }k=1,\dots,k(\eps)-1\right\}
\leq {\eta\e[\tilde v\e,\tilde v\e]\over \|\tilde v\e\|^2_{\mathcal{H}\e}}.
\end{gather*}
or equivalently, using $u\e=\tilde v\e+\delta\e$,
\begin{gather}\label{minmax}
\eta\e[\delta\e,\delta\e]\leq -2\eta\e[\tilde v\e,\delta\e]+{\eta\e[\tilde v\e,\tilde v\e]}\left(\|\tilde v\e\|^{-2}_{\mathcal{H}\e}-1\right).
\end{gather}

In view of \eqref{v_est1}, \eqref{v_est2}, \eqref{g_main} the second term on the right-hand-side of \eqref{minmax} tends to zero as $\eps\to 0$:
\begin{gather}\label{minmax1}
{\eta\e[\tilde v\e,\tilde v\e]}\left(\|\tilde v\e\|^{-2}_{\mathcal{H}\e}-1\right)\to 0\text{ as }\eps\to 0.
\end{gather}

Now, let us estimate the first term. One has
\begin{gather}\label{minmax2}
\eta\e[\tilde v\e,\delta\e]=\eta\e[v\e,u\e-v\e]+\eta\e[v\e,g\e]-\eta\e[g\e,\delta\e].
\end{gather}
Integrating by parts and using \eqref{v}, \eqref{A} and \eqref{v_est7} we get:
\begin{multline}\label{delta1} 
\eta\e[v\e,u\e-v\e]=\suml_{i\in \I}\intl_{D_i\e}\a\e \nabla v_i\e\cdot\nabla (u\e-v\e)\d x\\=
\a\e \langle u\e \rangle_{B_i^{\eps}} \suml_{i\in \I} \left(\intl_{S^{\eps,+}_i}{\partial v_i\e\over\partial |x-x^{i,\eps}|}u\e \d s-\intl_{S^{\eps,-}_i}{\partial v_i\e\over\partial |x-x^{i,\eps}|}(u\e-\langle u\e \rangle_{B_i\e}) \d s\right)\\
={\a\e A\e \omega_{n-1}(n-2)}\suml_{i\in \I} \langle u\e \rangle_{B_i^{\eps}} \left(-\langle u\e \rangle_{S_i^{\eps,+}}+\langle u\e \rangle_{S_i^{\eps,-}}-\langle u\e \rangle_{B_i^{\eps}}\right).
\end{multline}
Then, using the Cauchy-Schwarz inequality,  \eqref{Pi1conv3}, \eqref{Ae+}, \eqref{u_L2}, Lemma \ref{lemma_est} and the fact that $u_1=0$, we obtain  from \eqref{delta1}:
\begin{multline}\label{delta2}
\left|\eta\e[v\e,u\e-v\e]\right|^2\leq C\left({\a\e A\e}\right)^2\left\{ \suml_{i\in \I}\left|\langle u\e \rangle_{B_i^{\eps}}\right|^2 \right\}
\left\{\suml_{i\in\I}\left(\left|\langle u\e \rangle_{S_i^{\eps,+}}\right|^2+\left|\langle u\e \rangle_{S_i^{\eps,-}}-\langle u\e \rangle_{B_i^{\eps}}\right|^2\right)\right\}
\\\leq 
C_1 (q\e)^2 r\e \eps^{n-1}
\suml_{i\in\I}\left(\left|\langle \Pi_1\e u\e \rangle_{\Gamma_i^{\eps}}\right|^2+\left|\langle u\e \rangle_{S_i^{\eps,+}}-\langle \Pi_1\e  u\e\rangle_{\Gamma_i^{\eps}}\right|^2+\left|\langle  u\e \rangle_{S_i^{\eps,-}}-\langle u\e \rangle_{B_i^{\eps}}\right|^2\right)\\\leq C_2\left(\|\Pi_1\e u\e\|_{L^2(\Gamma)}^2+\eps\|\nabla \Pi_1\e u\e\|_{L^2(\cupl_iY_i\e)}^2+\eps\|\nabla u\e\|_{L^2(\cupl_i B_i\e)}^2\right)\to 0\text{ as }\eps\to 0.
\end{multline}
Further, in view of \eqref{v_est1}, \eqref{g_main},
\begin{gather}\label{minmax4}
\liml_{\eps\to 0}\eta\e[v\e,g\e]=0.
\end{gather}
And finally, using \eqref{normalization2}, \eqref{v_est1}, \eqref{g_main}, we obtain:
\begin{gather}\label{minmax5}
\left|\eta\e[g\e,\delta\e]\right|\leq \left|\eta\e[g\e,u\e]\right|+\left|\eta\e[g\e,v\e]\right|+\left|\eta\e[g\e,g\e]\right|\to 0\text{ as }\eps\to 0.
\end{gather}
It follows from \eqref{minmax2}, \eqref{delta2}-\eqref{minmax5} that
\begin{gather}\label{delta3}
\liml_{\eps\to 0}\eta\e[\tilde v\e,\delta\e]=0.
\end{gather}
Combining \eqref{minmax}, \eqref{minmax1}, \eqref{delta3} we conclude that
\begin{gather}\label{delta_H1}
\liml_{\eps\to 0}\eta\e[\delta\e,\delta\e]=0.
\end{gather}

Finally, using \eqref{normalization2}, \eqref{u=v-g+d}, \eqref{v_est1}, \eqref{g_main},  \eqref{delta_H1}, we obtain:
\begin{gather*}
\lambda=\liml_{\eps\to 0}\lambda\e=\liml_{\eps\to 0}\eta\e[u\e,u\e]=\liml_{\eps\to 0}\eta\e[v\e,v\e]=q.
\end{gather*}

Property \eqref{ha} is completely proved. 

\subsubsection{Proof of property \eqref{hb}} Let $\lambda\in \sigma(\mathcal{A}_{q,r})$ if $r>0$ (respectively,  $\lambda\in \sigma({\mathcal{A}_{q,0}})$ if $r=0$). We have to prove that
\begin{gather}\label{Hb1}
\text{there exists a family }\left\{\lambda\e\in\sigma(\mathcal{A}\e)\right\}_\eps:\ \lambda\e\to\lambda\text{ as }\eps\to 0
\end{gather}
or, equivalently,
$$\forall\delta>0\ \exists\eps(\delta)\text{ such that }\forall\eps<\eps(\delta):  (\lambda-\delta,\lambda+\delta)\cap\sigma(\mathcal{A}^{\eps})\not=\varnothing.$$

For proving this indirectly we assume the opposite. Then a positive number
$\delta$ and a
subsequence (for convenience still indexed by $\eps$) exist such that
\begin{gather}
\label{dist} (\lambda-\delta,\lambda+\delta)\cap\sigma(\mathcal{A}^{\eps})=\varnothing.
\end{gather}

Since $\lambda\in\sigma(\mathcal{A}_{q,r})$ (respectively, $\lambda\in\sigma(\mathcal{A}_{q,0})$) there exists
$F=(f_1,f_2) \in L^2(\Omega)\oplus L^2(\Gamma)$, such that
\begin{gather}\label{notinim}
F\notin
\mathrm{range}(\mathcal{A}_{q,r} -\lambda I)\
\text{(respectively, }F\notin\mathrm{range}(\mathcal{A}_{q,0} -\lambda I)\text{)}.
\end{gather}

We introduce the function $f\e\in\mathcal{H}\e$ by 
\begin{gather*}
f\e(x)=\begin{cases} f_1(x),&x\in\Omega\e,\\
0,& x\in\cupl_{i\in\I}D_i\e,\\
{1\over\sqrt{r\e}}{\langle \mathbf{f}_2 \rangle_{\Gamma_i\e}},&x\in
B_i\e.
\end{cases}
\end{gather*}
Here $\mathbf{f}_2(x)=\sqrt{r} f_2(x)$ if $r>0$ (respectively, $\mathbf{f}_2(x)=f_2(x)$ if $r=0$).

Taking \eqref{ass4}  into account  we obtain:
\begin{gather*}
\|f\e\|^2_{\mathcal{H}\e}=\|f_1\|^2_{L^2(\Omega\e)}+{1\over r\e}\suml_{i\in\I}\beta\e|B_i\e|\left|\langle \mathbf{f}_2 \rangle_{\Gamma_i\e}\right|^2\leq \|f_1\|^2_{L^2(\Omega)}+\|\mathbf{f}_2\|^2_{L^2(\Gamma)}\leq C\|F\|^2_{L^2(\Omega)\oplus L^2(\Gamma)}.
\end{gather*}

By virtue of \eqref{dist} $\lambda$ is in the resolvent set of $\mathcal{A}\e$. Hence  there exists a unique $u\e\in\mathrm{dom}(\mathcal{A}\e)$ satisfying
$$\mathcal{A}\e u\e-\lambda u\e=f\e$$
and the following estimates are valid:
\begin{gather}\label{uL2+}
\|u\e\|_{\mathcal{H}\e}\leq \delta^{-1}\|f\e\|_{\mathcal{H}\e}\leq C_1,\\\label{uH1+}
\eta\e[u\e,u\e]= \lambda\|u\e\|^2_{\mathcal{H}\e}+(f\e,u\e)_{\mathcal{H}\e}\leq C_2.
\end{gather}
Estimates \eqref{uL2+}-\eqref{uH1+} imply the existence of a subsequence (again indexed by $\eps$)  and $u_1\in H_0^1(\Omega)$, $u_2\in L^2(\Gamma)$ satisfying
\eqref{Pi1conv1}-\eqref{Pi2conv}.
 
For an arbitrary $w\in H^1(\Omega)$ one has the equality:
\begin{gather}\label{int_eq_f}
\intl_{\Omega}a\e \nabla u\e \cdot \nabla w  \d x -\lambda\intl_{\Omega}b\e  u\e w \d x=\intl_{\Omega}b\e f\e w   \d x.
\end{gather}
We plug into \eqref{int_eq_f} the function $w=w\e(x)$ defined by  \eqref{we} and pass to the limit as $\eps\to 0$. 
Using the same arguments as in the proof of property \eqref{ha} we conclude that that $u_1,u_2$ satisfy
\begin{multline}\label{int_eq_f_final}
\intl_{\Omega}\nabla u_1\cdot \nabla w_1 \d x+\intl_{\Gamma}\left(qr u_1 w_1-q\sqrt{r}u_2 w_1-q\sqrt{r}u_1 w_2+q u_2 w_2\right)\d s-\lambda\left(\intl_{\Omega}u_1 w_1 \d x+\intl_{\Gamma}u_2 w_2 \d s\right)\\=\intl_{\Omega}f_1  w_1 \d x+\intl_{\Gamma}\mathbf{f}_2   w_2 \d s,
\end{multline}
for an arbitrary $w_1\in C_0^\infty(\Omega)$, $w_2\in C^\infty(\Gamma)$ (and by  density arguments for any arbitrary $w_1\in H^1_0(\Omega)$, $w_2\in L^2(\Gamma)$).
It follows from \eqref{int_eq_f_final} that 
\begin{gather*}
\begin{array}{llll}
\text{if }r>0\text{ then }&U=(u_1,r^{-1/2}u_2)\in\mathrm{dom}(\mathcal{A}_{q,r})&\text{ and }&\mathcal{A}_{q,r} U-\lambda U=F,\\
\text{if }r=0\text{ then }& U=(u_1,u_2)\in\mathrm{dom}(\mathcal{A}_{q,0})&\text{ and }&\mathcal{A}_{q,0}  U-\lambda U=F.
\end{array}
\end{gather*}
This contradicts to \eqref{notinim}. Thus there is $\lambda\e\in\sigma(\mathcal{A}\e)$ such that $\liml_{\eps\to 0}\lambda\e=\lambda$. Property \eqref{hb} is proved 
which finishes the proof of Theorem \ref{th1} for the case $q<\infty$.

\subsection{\label{subsec33}Spectrum of operator $\mathcal{A}_{q,r}$ ($0<q<\infty$, $r>0$)}

This subsection is devoted to the proof of Lemma \ref{lmAqr}.
First we study the discrete spectrum of the operator $\mathcal{A}_{q,r}$. Let $\lambda\not=q$ be an eigenvalue of $\mathcal{A}_{q,r}$ corresponding to the eigenfunction $U=(u_1,u_2)\in\mathcal{H}_r$. This means that
\begin{multline}
\label{disc1}
\intl_{\Omega}\nabla u_1\cdot\nabla \overline{v_1} \d x+qr\intl_{\Gamma}(u_1-u_2)\overline{(v_1-v_2)}\d s\\=\lambda\left(\intl_{\Omega}u_1 \overline{v_1}\d x+r\intl_{\Gamma}u_2 \overline{v_2}\d s\right),\quad \forall v=(v_1,v_2)\in\mathrm{dom}({\eta}_{q,r}).
\end{multline}
One can easily get from \eqref{disc1} (taking first the test-function $(0,v_2)$ and then $(v_1,0)$):
\begin{multline}\label{disc2}
U=(u_1,u_2)\in \mathrm{dom}({\eta}_{q,r})\text{ satisfies \eqref{disc1} }\Longleftrightarrow\
u_2={qu_1\over q-\lambda}\text{ and }\\
\intl_{\Omega}\nabla u_1\cdot\nabla \overline{v_1} \d x-{\lambda qr\over q-\lambda}\intl_{\Gamma}u_1\overline{v_1} \d s=\lambda\intl_{\Omega}u_1\overline{v_1} \d x,\ \forall v_1\in H_0^1(\Omega).
\end{multline}

Let $\mu\in\mathbb{R}$. By $\eta^\mu$ we denote the sesquilinear form in $L^2(\Omega)$ defined as follows:
$$\eta^\mu[u,v]=\intl_{\Omega}\nabla u\cdot\nabla \overline{v} \d x-\mu \intl_\Gamma u\overline{v} \d s,\quad \mathrm{dom}(\eta^\mu)=H^1_0(\Omega).$$
We denote by $\mathcal{A}^\mu$ the operator associated with this form.
Formally the eigenvalue problem $\mathcal{A}^\mu u=\lambda u$ can be written as
\begin{gather}\label{mu_problem}
\begin{cases}
-\Delta u=\lambda u&\text{in }\Omega\setminus\Gamma,\\ \left[u\right]=0 & \text{on } \Gamma,\\
\left[{\partial u\over\partial  n}\right]-\mu u=0&\text{on }\Gamma,\\
u=0&\text{on }\partial\Omega.
\end{cases}
\end{gather}
The spectrum of $\mathcal{A}^\mu$ is purely discrete. We denote by $$\lambda_1(\mu)\leq\lambda_2(\mu)\leq\dots\leq\lambda_k(\mu)\leq\dots \underset{k\to\infty}\to\infty$$ the
sequence of eigenvalues of $\mathcal{A}^\mu$ repeated according to their multiplicity. By $\{u_k(\mu)\}_{k=1}^\infty$ we denote the corresponding sequence of eigenfunctions satisfying $(u_k(\mu),u_l(\mu))_{L^2(\Omega)}=\delta_{kl}$.

We denote by $\sigma_{p}(\mathcal{A}_{q,r})$ the set of eigenvalues of the operator $\mathcal{A}_{q,r}$. It follows easily from \eqref{disc2} that
\begin{gather}\label{lambdamu}
\sigma_{p}(\mathcal{A}_{q,r})\setminus\{q\}=\left\{\lambda\in\mathbb{R}:\ \lambda=\lambda_k(\mu)={q\mu\over qr+\mu}\text{ for some }\mu\in\mathbb{R}\text{ and some }k\in\mathbb{N}
\right\}
\end{gather}

We also introduce the operator  $\mathcal{A}^D$  acting in $L^2(\Omega)$ being associated with the form  $\eta^D$, which is defined as follows:
$$\mathrm{dom}(\eta^D)=\{u\in H_0^1(\Omega):\ u=0\text{ on }\Gamma\},\quad \eta^D[u,v]=\intl_{\Omega}\nabla u\cdot\nabla v\d x.$$
We denote by  $\{\lambda_k^D\}_{k=1}^\infty$ the sequence of eigenvalues of $\mathcal{A}^D$ written in increasing order and with account of their multiplicity.

Below we establish some properties of the spectrum of the operator $\mathcal{A}^\mu$.
\begin{proposition}\label{lm1}
One has for each fixed $k\in\mathbb{N}$:
\begin{flalign}\label{lm1a}
I.\quad &\text{the function }\lambda_k(\cdot):\mathbb{R}\to \mathbb{R}\text{ is continuous and monotonically decreasing},
\\\label{lm1b}
II.\quad&\lambda_k(\mu)\to\lambda_k^D\text{ as }\mu\to -\infty,
\\\label{lm1c}
III.\quad&\lambda_k(\mu)\to -\infty\text{ as }\mu\to \infty.
\end{flalign}
\end{proposition}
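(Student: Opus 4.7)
The plan is to work throughout from the min-max characterization
\begin{gather*}
\lambda_k(\mu)=\inf_{L\in\mathcal{L}_k}\sup_{0\neq v\in L}\frac{\eta^\mu[v,v]}{\|v\|^2_{L^2(\Omega)}},\quad \eta^\mu[v,v]=\|\nabla v\|^2_{L^2(\Omega)}-\mu\|v\|^2_{L^2(\Gamma)},
\end{gather*}
where $\mathcal{L}_k$ denotes the family of $k$-dimensional subspaces of $H_0^1(\Omega)$.

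For (I), the monotonicity of $\mu\mapsto\eta^\mu[v,v]$ for each fixed $v$ transfers via the min-max formula to non-increasingness of $\lambda_k(\cdot)$. Strict monotonicity follows from the Hellmann--Feynman identity $\lambda_k'(\mu)=-\|u_k(\mu)\|^2_{L^2(\Gamma)}$, valid on intervals of analyticity provided by Kato's theory of self-adjoint holomorphic form families of type (B) (the trace inequality shows that $\|\cdot\|^2_{L^2(\Gamma)}$ is form-bounded relative to $\|\nabla\cdot\|^2_{L^2(\Omega)}$); this derivative can vanish on at most a discrete set, since otherwise the associated eigenfunction would satisfy both $u=0$ and $[\partial u/\partial n]=0$ on $\Gamma$, forcing it to be a Dirichlet eigenfunction on $\Omega\setminus\Gamma$. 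Continuity follows either from the same analytic framework or directly from the min-max formula, using $|\eta^{\mu_1}[v,v]-\eta^{\mu_2}[v,v]|\leq|\mu_1-\mu_2|\|v\|^2_{L^2(\Gamma)}$ together with a trace inequality to bound the spread of the Rayleigh quotients.

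For (III), I would exhibit an explicit trial subspace. Choose $k$ functions $\phi_1,\dots,\phi_k\in C_0^\infty(\Omega)$ with pairwise disjoint supports, each having nontrivial trace on $\Gamma$; this is possible because $\Gamma$ is a nonempty relatively open subset of the hyperplane $\{x_n=0\}$ lying inside $\Omega$. Disjointness of the supports diagonalizes the Rayleigh quotient on $L:=\mathrm{span}\{\phi_1,\dots,\phi_k\}\in\mathcal{L}_k$, giving
\begin{gather*}
\lambda_k(\mu)\leq\max_{j=1,\dots,k}\frac{\|\nabla\phi_j\|^2_{L^2(\Omega)}-\mu\|\phi_j\|^2_{L^2(\Gamma)}}{\|\phi_j\|^2_{L^2(\Omega)}}\to -\infty\ \text{as}\ \mu\to+\infty.
\end{gather*}

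The main work lies in (II), and this is where I expect the principal obstacle. The easy half, $\lambda_k(\mu)\leq\lambda_k^D$ for every $\mu$, is obtained by restricting the defining min-max to $k$-dimensional subspaces of $\mathrm{dom}(\eta^D)=\{v\in H_0^1(\Omega):v|_\Gamma=0\}$, on which $\eta^\mu\equiv\eta^D$. For the matching lower bound, pick a sequence $\mu_n\to-\infty$ and consider the first $k$ $L^2(\Omega)$-orthonormal eigenfunctions $u_j(\mu_n)$ of $\mathcal{A}^{\mu_n}$; from
\begin{gather*}
\|\nabla u_j(\mu_n)\|^2_{L^2(\Omega)}+|\mu_n|\|u_j(\mu_n)\|^2_{L^2(\Gamma)}=\lambda_j(\mu_n)\leq\lambda_k^D
\end{gather*}
one reads off a uniform $H_0^1$-bound on the $u_j(\mu_n)$ together with $\|u_j(\mu_n)\|^2_{L^2(\Gamma)}=O(|\mu_n|^{-1})$. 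Passing to a subsequence, Rellich compactness and continuity of the trace yield $L^2(\Omega)$-orthonormal limits $w_1,\dots,w_k\in H_0^1(\Omega)$ with $w_j|_\Gamma=0$, so $\mathrm{span}(w_1,\dots,w_k)$ is a genuine $k$-dimensional subspace of $\mathrm{dom}(\eta^D)$. Applying the min-max for $\lambda_k^D$ to this subspace, combined with the weak $H_0^1$ lower semicontinuity of $v\mapsto\|\nabla v\|^2_{L^2(\Omega)}$ and the spectral estimate $\eta^{\mu_n}[v,v]\leq\lambda_k(\mu_n)\|v\|^2_{L^2(\Omega)}$ on the span of the eigenfunctions, yields $\lambda_k^D\leq\liminf_{n\to\infty}\lambda_k(\mu_n)$, closing the argument. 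The delicate point is ensuring that the $w_j$ remain linearly independent in the limit, which is handled by the strong $L^2(\Omega)$-convergence of the orthonormal system.
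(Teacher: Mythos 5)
Your proposal is essentially correct, and for parts I and III it runs parallel to the paper's own argument: monotonicity from the min-max formula, and unboundedness below via a trial subspace of disjointly supported functions with nontrivial trace on $\Gamma$. The genuinely different ingredient is your proof of part II. The paper observes that the forms $\eta^\mu$ increase monotonically to $\eta^D$ as $\mu\to-\infty$ and invokes Simon's monotone-convergence theorem for forms to get strong resolvent convergence $(\mathcal{A}^\mu+I)^{-1}\to(\mathcal{A}^D+I)^{-1}$, then upgrades this to norm resolvent convergence by \cite[Thm.\ VIII-3.5]{K66} using compactness of the resolvents and their operator-order monotonicity; convergence of the eigenvalues $\lambda_k(\mu)\to\lambda_k^D$ then follows from standard spectral perturbation theory. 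Your route is more hands-on: a uniform $H_0^1$-bound and trace decay $\|u_j(\mu_n)\|^2_{L^2(\Gamma)}=O(|\mu_n|^{-1})$ on the first $k$ eigenfunctions, Rellich compactness producing a strong $L^2$-limit that preserves orthonormality (hence dimension), vanishing traces in the limit by compactness of the trace map, and the min-max/weak lower semicontinuity argument for $\|\nabla\cdot\|^2$. Both work; the paper's is shorter but relies on two black boxes, yours is longer but elementary and self-contained. Two minor remarks on part I: your aside on \emph{strict} decrease via a Hellmann--Feynman argument is unnecessary (the later uniqueness of the intersections $\mathcal{C}_k\cap\mathcal{C}_\pm$ only needs non-increasingness of $\lambda_k$, since the hyperbola branches are strictly increasing) and, as stated, not quite rigorous — eigenfunctions with $u|_\Gamma=0$ and $[\partial u/\partial n]=0$ do exist, so the ``discrete zero set'' claim needs more care. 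Also, your one-line sketch of continuity via $|\eta^{\mu_1}[v,v]-\eta^{\mu_2}[v,v]|\leq|\mu_1-\mu_2|\|v\|^2_{L^2(\Gamma)}$ glosses over the fact that $\|v\|^2_{L^2(\Gamma)}/\|v\|^2_{L^2(\Omega)}$ is unbounded on $H_0^1(\Omega)$; the paper handles precisely this point by the trace-interpolation trick that yields the bound $\eta^\mu[u,u]\leq\alpha\,\eta^{\tilde\mu}[u,u]+\beta\|u\|^2_{L^2(\Omega)}$ with $\alpha\to1$, $\beta\to0$ as $\tilde\mu\to\mu$. Falling back on Kato's holomorphic family machinery, as you also suggest, is a legitimate alternative.
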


{
\begin{proof}
\textit{I.} One has due to the min-max principle (see, e.g., \cite{Davies}):
\begin{gather}\label{minmax_p}
\lambda_k(\mu)=\minl_{L\in \mathcal{L}_k}
\maxl_{u\in L}{\eta^\mu[u,u]\over \|u\|^2_{L^2(\Omega)}},\ k=1,2,3\dots
\end{gather}
where $\mathcal{L}_k$ is the set of all $k$-dimensional subspaces of $H^1_0(\Omega)$. Then the monotonicity follows easily from \eqref{minmax_p} and the monotonicity (for fixed $u$) of the function $\mu\mapsto \eta^\mu[u,u]$.

Now, let us prove continuity. Let $[\mu_0,\mu_1]\subset\mathbb{R}$  be an arbitrary compact interval. We choose some $\tau>0$ such that
\begin{gather}\label{cont1}
\tau\mu_1\leq{1\over 2}
\end{gather}
(if $\mu_1<0$ we can choose an arbitrary $\tau>0$). By a standard trace inequality there exists $C_\tau>0$ such that
\begin{gather}\label{cont2}
\forall u\in H^1(\Omega):\ \|u\|^2_{L^2(\Gamma)}\leq \tau
\|\nabla u\|_{L^2(\Omega)}^2+C_\tau\|u\|^2_{L^2(\Omega)}.
\end{gather}

Now, let $\mu,\tilde\mu\in[\mu_0,\mu_1]$, $\mu\leq\tilde\mu$. We denote for abbreviation:
\begin{gather}\label{cont3}
\a:=1+{\tau(\tilde\mu-\mu)\over 1-\tau\tilde\mu},\quad\beta:={C_\tau(\tilde\mu-\mu)\over 1-\tau\tilde\mu}.
\end{gather}
In view of \eqref{cont1} $\a$ and $\b$ are positive.

For each $u\in H^1_0(\Omega)\setminus\{0\}$ we obtain, using \eqref{cont2},
\begin{multline*}
(1-\a)\|\nabla u\|^2_{L^2(\Omega)}+(\a\tilde\mu-\mu)\|u\|^2_{L^2(\Gamma)}\\\leq
\left(1-\a+(\a\tilde\mu-\mu)\tau\right)\|\nabla u\|^2_{L^2(\Omega)}+\left(\a\tilde\mu-\mu\right)C_\tau\|u\|^2_{L^2(\Omega)}\overset{\eqref{cont3}}=\beta \|u\|^2_{L^2(\Omega)}
\end{multline*}
or, equivalently,
\begin{gather}\label{cont4}
{\eta^\mu[u,u]\over \|u\|^2_{L^2(\Omega)}}\leq\a {\eta^{\tilde\mu}[u,u]\over \|u\|^2_{L^2(\Omega)}}+\beta.
\end{gather}

It follows from \eqref{minmax_p} and \eqref{cont4} that for each fixed $k\in\mathbb{N}$
\begin{gather*}
\lambda_k(\mu)\leq \a\lambda_k(\tilde\mu)+\beta,
\end{gather*}
Using \eqref{cont1}, \eqref{cont3} and the monotonicity of $\lambda_k(\cdot)$, we obtain:
\begin{multline}\label{cont5}
0\leq \lambda_k(\mu)-\lambda_k(\tilde\mu)\leq{\tau\lambda_k(\tilde\mu)+C_\tau\over 1-\tau\tilde\mu}(\tilde\mu-\mu)\\\leq {\tau\lambda_k(\mu_0)+C_\tau\over 1-\tau\mu_1}(\tilde\mu-\mu)\leq 2(\tau\lambda_k(\mu_0)+C_\tau)(\tilde\mu-\mu)
\end{multline}
which implies the desired continuity on the interval $[\mu_0,\mu_1]$. Since this interval was chosen arbitrarily we obtain the continuity on the whole axis. \medskip

\textit{II.} 
It is easy to see that 
$$\mathrm{dom}(\eta^D)=\left\{u\in H_0^1(\Omega):\ \sup_{\mu\leq 0} \eta^\mu[u,u]\right\}$$
and
$$\eta^D[u,v]=\eta^\mu[u,v]\text{ for }u,v\in \mathrm{dom}(\eta^D).$$
Then, using the monotonicity (for fixed $u$) of the function $\mu\mapsto \eta^\mu[u,u]$ and
Theorem 3.1 from \cite{Simon}, we conclude that for each $f\in L^2(\Omega)$
\begin{gather}\label{strong}
(\mathcal{A}^\mu+I)^{-1}f \to (\mathcal{A}^D+I)^{-1}f\text{ in }L^2(\Omega)\text{ as }\mu\to -\infty.
\end{gather}
Moreover, since the operators $(\mathcal{A}^\mu+I)^{-1}$ and $(\mathcal{A}^D+I)^{-1}$ are compact and $(\mathcal{A}^{\mu_1}+ I)^{-1}\geq (\mathcal{A}^{\mu_2}+ I)^{-1}\geq 0$
for $\mu_1\geq\mu_2$, by virtue of  \cite[Theorem VIII-3.5]{K66} 
\eqref{strong} can be improved to the norm convergence  
\begin{gather*}
\|(\mathcal{A}^\mu+ I)^{-1}-(\mathcal{A}^D+ I)^{-1}\|\to 0\text{ as 
}\mu\to -\infty,
\end{gather*}
implying  the convergence of eigenvalues \eqref{lm1b}.
\medskip

\textit{III.} Let $m\in\mathbb{N}$. Let $B_j$, $j=1,\dots,m$ be the open balls with centers at some points $z_j\in \Gamma$ and with  radius $b$. It is supposed $z_j$ and $b$ are such that
\begin{gather}\label{plus1}
B_j\subset\Omega,\ j=1,\dots,m\text{ and }B_i\cap B_j=\varnothing,\ i\not= j.
\end{gather}

Let $v(x)$ be an arbitrary smooth function such that $v(x)>0$ for $|x|< b$ and $v(x)=0$ for $|x|\geq b$. We denote $v_j(x)=v(x-z_j)$. Since $\supp(v_j)\subset B_j$, we have $v_j\in H^1_0(\Omega)$. We denote
$$L=\mathrm{span}\{v_1,\dots,v_m\}.$$

It is clear that $\mathrm{dim}(L)=m$, and thus using \eqref{minmax_p} we get
\begin{gather}\label{plus2}
\lambda_m(\mu)\leq\maxl_{u\in L}{\eta^{\mu}[u,u]\over \|u\|^2_{L^2(\Omega)}}.
\end{gather}
Let $0\not= \tilde  u\in L$ maximize the quotient on the right-hand-side of \eqref{plus2}. It can be represented in the form $\tilde u=\suml_{j=1}^m \a_j v_j$, where $\a_j\in\mathbb{R}$, $\a:=\suml_{j=1}^m \a_j^2>0$, and hence
$$\|\tilde u\|_{L^2(\Omega)}^2=\a\|v\|^2_{L^2(\{|x|<b\})},\quad
\|\tilde u\|_{L^2(\Gamma)}^2=\a\|v\|^2_{L^2(\{|x|<b,x_n=0\})},\quad \|\nabla \tilde u\|_{L^2(\Omega)}^2=\a \|\nabla v\|^2_{L^2(\{|x|<b\})}.$$
Then, taking into account that $\|v\|^2_{L^2(\{|x|<b\})}>0$, $\|v\|^2_{L^2(\{|x|<b,x_n=0\})}>0$, we get
\begin{gather}\label{plus3}
\lambda_m(\mu)= {\|\nabla \tilde u\|^2_{L^2(\Omega)}-\mu \|\tilde u\|^2_{L^2(\Gamma)}\over \|\tilde u\|^2_{L^2(\Omega)}}\leq A-\mu B,\text{ where } B>0.
\end{gather}
Then \eqref{lm1c} follows directly from \eqref{plus3}.
\end{proof}
}

Now, with Proposition \ref{lm1} we can easily establish the properties of the set on the right-hand-side of \eqref{lambdamu}. 
We denote by ${\mathcal{C}}$ the curve $${\mathcal{C}}=\left\{(\lambda,\mu)\in\mathbb{R}^2: \lambda={q\mu \over qr+\mu}\right\}.$$ It consists of two branches ${\mathcal{C}}_{\pm}=\left\{(\lambda,\mu)\in{\mathcal{C}}: \pm(\mu+qr)> 0\right\}$.  
We also introduce the curves ${\mathcal{C}}_k=\{(\lambda,\mu)\in\mathbb{R}^2:\ \lambda=\lambda_k(\mu)\}$, $k\in\mathbb{N}$.

It follows easily from \eqref{lm1a}-\eqref{lm1c} that 
\begin{itemize}
\item For each $k\in\mathbb{N}$ the curve ${\mathcal{C}}_k$ intersects
the curve ${\mathcal{C}}_{+}$ exactly in one point. We denote the corresponding value of $\lambda$ by $\lambda_k^+$. 

\item  We denote by $k_0$ the smallest integer satisfying $\lambda_{k_0}^D\leq q$ and $\lambda_{k_0+1}^D> q$. Then for each $k\in\mathbb{N}$ the curve ${\mathcal{C}}_{k_0+k}$ intersects 
the curve ${\mathcal{C}}_{-}$ exactly in one point. We denote the corresponding value of $\lambda$ by $\lambda_k^-$. For $k\leq k_0$ the curve ${\mathcal{C}}_k$ has no intersections with the curve ${\mathcal{C}}_-$.

\item \eqref{distr} holds true.
\end{itemize}
Thus, with \eqref{lambdamu}, we conclude that
\begin{gather}\label{lambdamu+}
\sigma_{p}(\mathcal{A}_{q,r})\setminus\{q\}=\{\lambda_k^-,k=1,2,3...\}\cup\{\lambda_k^+,k=1,2,3...\}.
\end{gather}

Since $\lambda_k^+\to q$ as $k\to\infty$, we have  $q\in\sigma_{ess}(\mathcal{A}_{q,r})$. 
To complete the proof of Lemma \ref{lmAqr} we have to show that
if $\lambda\not =q$ then $\lambda\not\in\sigma_{ess}(\mathcal{A}_{q,r})$

We denote by $\mathcal{B}$ the following operator acting in $\mathcal{H}_r$:
$$\mathrm{dom}(\mathcal{B})=H^1_0(\Omega)\oplus L^2(\Gamma),\  \mathcal{B}U=(0,qu_1|_\Gamma),\text{ where }U=(u_1,u_2).$$
In view of the embedding and trace theorems the operator $\mathcal{B}(\mathcal{A}_{q,r}+ I)^{-1}$ is compact, that is $\mathcal{B}$ is an $\mathcal{A}_{q,r}$-compact operator. Thus (see, e.g, \cite[Theorem 1.9]{ELZ}) the operator $$\widetilde{\mathcal{A}}:=\mathcal{A}_{q,r}+\mathcal{B}$$
with $\mathrm{dom}(\widetilde{\mathcal{A}}):=\mathrm{dom}(\mathcal{A}_{q,r})$ 
is closed and 
$$\sigma_{ess}(\mathcal{A}_{q,r})=\sigma_{ess}(\widetilde{\mathcal{A}}),$$
where the essential spectrum of non-self-adjoint operator $\widetilde{\mathcal{A}}$ is understood in the following sense\footnote{There are several ways how to define the essential spectrum for non-self-adjoint operators.
All the definitions can be found, for example, in \cite{ELZ} (for self-adjoint operators they are equivalent). One of the possible ways is to define it by \eqref{ess_def}. The advantage of this definition is twofold: the so-defined essential spectrum can be characterized via singular sequences (see \eqref{Weyl}) and it is stable under  relatively compact perturbations.}: 
\begin{gather}\label{ess_def}
\sigma_{ess}(\widetilde{\mathcal{A}}):=\mathbb{C}\setminus
\left\{\lambda:\ \mathrm{range}(\widetilde{\mathcal{A}}-\lambda I)\text{ is closed}\text{ and }\mathrm{dim}(\mathrm{ker}(\widetilde{\mathcal{A}}-\lambda I))<\infty\right\}.
\end{gather}
Moreover in view of  \cite[Theorem 1.6]{ELZ}  $\lambda$ belongs to $\sigma_{ess}(\widetilde{\mathcal{A}})$
iff 
\begin{gather}\label{Weyl}
\exists \left\{U^k\in\mathrm{dom}(\widetilde{\mathcal{A}})\right\}_k\text{ such that }\|U^k\|_{\mathcal{H}_r}=1,\ U^k \rightharpoonup 0\text{ in }\mathcal{H}_r,\ \widetilde{\mathcal{A}}U^k-\lambda U^k\to 0\text{ as }{k\to \infty}.
\end{gather}

Suppose that $\lambda\not =q$ and let us prove that $\lambda\not\in\sigma_{ess}(\widetilde{\mathcal{A}})$.
We assume the opposite. Then there exists a sequence $U^k=(u_1^k,u_2^k)\in\mathrm{dom}(\widetilde{\mathcal{A}})$, $k\in\mathbb{N}$ satisfying \eqref{Weyl}.
Consequently,  for an arbitrary sequence $V^k=(v^k_1,v^k_2)\in H^1_0(\Omega)\oplus L^2(\Gamma)$ 
satisfying $\|V^k\|_{\mathcal{H}_r}\leq C$ one has
$$(\widetilde{\mathcal{A}}U^k-\lambda U^k,V^k)_{\mathcal{H}_r}\to 0\text{ as }k\to\infty$$
or, equivalently, 
\begin{multline}\label{Weyl+}
(\nabla u^k_1,\nabla v^k_1)_{L^2(\Omega)}+qr (u_1^k,v_1^k)_{L^2(\Gamma)}-qr (u^k_2,v^k_1)_{L^2(\Gamma)}+qr (u_2^k,v_2^k)_{L^2(\Gamma)} \\
-\lambda (u_1^k,v_1^k)_{L^2(\Omega)}
-\lambda r (u_2^k,v_2^k)_{L^2(\Gamma)}\to 0\text{ as }k\to \infty.
\end{multline}
Plugging into  \eqref{Weyl+} $V^k:=(0,u_2^k)$ and taking into account that $\lambda\not=q$ and $r>0$ we obtain that
\begin{gather}\label{u2to0} 
u_2^k\to 0\text{ as }k\to\infty. 
\end{gather}
Then, taking in  \eqref{Weyl+} $V^k:=(v,0)$, where $v$ is an arbitrary function belonging to $H^1_0(\Omega)$, we obtain:
\begin{gather}\label{Weyl++}
\langle u^k_1,v\rangle -qr (u^k_2,v)_{L^2(\Gamma)}
-\lambda (u_1^k,v)_{L^2(\Omega)}\to 0\text{ as }k\to \infty,
\end{gather}
where $\langle u,v\rangle:= (\nabla u,\nabla v)_{L^2(\Omega)}+qr (u,v)_{L^2(\Gamma)}$.
In view of the Friedrichs and trace inequalities the norm $\|u\|:=\langle u,u\rangle$ is equivalent to the standard Sobolev norm in $H^1_0(\Omega)$. Then, using \eqref{u2to0} and \eqref{Weyl++} and taking into account that $\|u_1^k\|_{L^2(\Omega)}\leq C$, we conclude that
\begin{gather*}\label{Weyl+++}
|\langle u^k_1,v\rangle|\leq C,\ \forall v\in H^1_0(\Omega).
\end{gather*}
Then by the Banach-Steinhaus Theorem 
$$\langle u^k_1,u_1^k\rangle \leq C,$$
i.e. $\{u_1^k\}_k$ is bounded in $H^1(\Omega)$. Thus by the Rellich embedding theorem the sequence $u_1^k$ is compact in $L^2(\Omega)$, which, together with \eqref{u2to0}, contradicts to 
$\|U^k\|_{\mathcal{H}_r}=1,\ U^k \rightharpoonup 0\text{ in }\mathcal{H}_r$.

Lemma \ref{lmAqr} is proved.

\subsection{\label{subsec34}Proof of Theorem \ref{th1}: the case $q=\infty$}

Let $\lambda\e\in\sigma(\mathcal{A}\e)$ and  $\lambda\e\to\lambda$ as $\eps\to 0$. We have to prove that   
\begin{gather*}
\lambda\in\sigma(\mathcal{A}_{\infty,r})\text{ if }r>0,\text{\quad and\quad  }\lambda\in\sigma(\mathcal{A}_{\infty,0})\text{ if }r=0.
\end{gather*}
 
Again by $u\e$ we denote the eigenfunction corresponding to $\lambda\e$ and satisfying \eqref{normalization1}-\eqref{normalization2}. In the same way as in the case $q<\infty$ we conclude that there exists $(u_1,u_2)\in H^1_0(\Omega)\oplus L^2(\Gamma)$ such that \eqref{Pi1conv1}-\eqref{Pi1conv3} hold.

For an arbitrary $w\in H^1_0(\Omega)$ one has the equality \eqref{int_eq}. Let $w_0$ be an arbitrary function from $C^\infty_0(\Omega)$. We plug into \eqref{int_eq} the function $w=w\e$ defined by 
\begin{gather}\label{we1} 
w\e(x)=w_0(x)+\suml_{i\in\I}\left(w_0(x^{i,\eps})-w_0(x)\right)\phi_i\e(x),
\end{gather}
where the function $\phi_i\e$ is defined by \eqref{phi_i}.

Taking into account 
that the integral
$\intl_{\cupl_i (D_i\e\cup B_i\e)}\a\e\nabla u\e\cdot\nabla w\e \d x$ is equal to zero (since $w\e=w_0(x^{i,\eps})=\mathrm{const}.$ in $D_i\e\cup B_i\e$) 
we pass to the limit in \eqref{int_eq} 
and via the same arguments as in the case $q<\infty$ we get:
\begin{gather}\label{inttilde}
\intl_{\Omega}\nabla u_1\nabla w_0 \d x=\lambda \intl_\Omega u_1 w_0 \d x+\lambda\intl_{\Gamma}\sqrt{r} u_2 w_0 \d s.
\end{gather}

Let us prove that $\sqrt{r} u_1|_\Gamma=u_2$. Let $w$ be an arbitrary function from $C^1(\Gamma)$, the operator $Q\e$ be defined by  \eqref{Qdef}.
One has, using \eqref{ass4}, \eqref{Pi1conv3}, \eqref{Pi2conv} and \eqref{Q}:
\begin{multline}\label{weak1}
\intl_\Gamma (\sqrt{r}u_1-u_2)w \d s=\liml_{\eps\to 0}\intl_\Gamma (\sqrt{r\e}\Pi_1\e u\e-\Pi_2\e u\e)(Q\e w )\d s\\=
\liml_{\eps\to 0}\suml_{i\in\I}\intl_{\Gamma_i\e}  (\sqrt{r\e}\Pi_1\e u\e-\Pi_2\e u\e)(Q\e w) \d s\\=\liml_{\eps\to 0}\suml_{i\in\I}\sqrt{r\e}\left(\langle \Pi_1\e u\e\rangle_{\Gamma_i\e}-\langle u\e\rangle_{B_i\e}\right)w(x^{i,\eps})|\Gamma_i\e|.
\end{multline}

Using \eqref{aa1}, \eqref{aa2}, \eqref{ineq_pm}, the inequality $\ds\suml_{i\in \I}\eps^{n-1}\le C$ and taking into account \eqref{ass3}, \eqref{ass4}, \eqref{ab}, \eqref{eta} we obtain from \eqref{weak1}:
\begin{multline*}
\left|\intl_\Gamma (\sqrt{r}u_1-u_2)w \d s\right|^2\\\leq C\liml_{\eps\to 0}\left(\sqrt{r\e}\suml_{i\in\I}\eps^{n-1}\left|\langle \Pi_1\e u\e\rangle_{\Gamma_i\e}-\langle u\e\rangle_{B_i\e}\right|\right)^2\leq C\liml_{\eps\to 0}r\e\left(\suml_{i\in\I}\eps^{n-1}\left|\langle \Pi_1\e  u\e\rangle_{\Gamma_i\e}-\langle u\e\rangle_{B_i\e}\right|^2\right) \suml_{i\in\I}\eps^{n-1} \\
\leq C_1\liml_{\eps\to 0}\suml_{i\in\I}r\e\eps^{n-1}\left(\left|\langle \Pi_1\e u\e\rangle_{\Gamma_i\e}-\langle u\e\rangle_{S_i^{\eps,+}}\right|^2+\left|\langle u\e\rangle_{S_i^{\eps,+}}-\langle u\e\rangle_{S_i^{\eps,-}}\right|^2+\left|\langle u\e\rangle_{S_i^{\eps,-}}-\langle u\e\rangle_{B_i^{\eps}}\right|^2\right)\\\leq
C_2\liml_{\eps\to 0}r\e\left(\eps\|\nabla \Pi_1\e  u\e\|^2_{L^2(\cupl_i Y_i\e)}+d\e\|\nabla u\e\|^2_{L^2(\cupl_i D_i\e)}+\eps\|\nabla u\e\|^2_{L^2(\cupl_i B_i\e)}\right)\leq C_3\liml_{\eps\to 0}\left(\eps r\e+{1\over q\e}\right)\eta\e[u\e,u\e]= 0.
\end{multline*}
Thus $\intl_\Gamma (\sqrt{r}u_1-u_2)w \d s=0$ for all $w\in C^1(\Gamma)$, whence 
\begin{gather}\label{u1u2}
\sqrt{r}u_1|_\Gamma=u_2.
\end{gather}

It follows from \eqref{inttilde}, \eqref{u1u2} that 
\begin{gather}\label{llll}
\begin{array}{llll}
\text{if }r>0&\text{then}&U=(u_1,u_1|_\Gamma)\in\mathrm{dom}(\mathcal{A}_{\infty,r}),& \mathcal{A}_{\infty,r}U=\lambda U,\\
\text{if }r=0&\text{then}&u_1\in\mathrm{dom}(\mathcal{A}_{\infty,0}),&\mathcal{A}_{\infty,0}u_1=\lambda u_1. 
\end{array}
\end{gather}

Finally we prove that $u_1\not=0$. One has
\begin{gather}\label{1}
1=\|u\e\|^2_{\mathcal{H}\e}=\|u\e\|^2_{L^2(\Omega\e)}+\suml_{i\in\I}\|u\e\|^2_{L^2(D_i\e)}+
\suml_{i\in\I}\beta\e\|u\e\|^2_{L^2(B_i\e)}.
\end{gather}
Therefore, using \eqref{Pi1conv2} and taking in mind that $u\e=\Pi_1\e u\e$ in $\Omega\e$, $\liml_{\eps\to 0}|\Omega\setminus\Omega\e|=0$:
\begin{gather}\label{1.1}
\liml_{\eps\to 0}\|u\e\|_{L^2(\Omega\e)}\leq \liml_{\eps\to 0}\left(\|u_1\|_{L^2(\Omega\e)}+\|u\e-u_1\|_{L^2(\Omega\e)}\right)=\|u_1\|_{L^2(\Omega)}.
\end{gather}
In the same way as in the case $q<\infty$ (see \eqref{u_inD}) we get:
\begin{gather}\label{1.2}
\liml_{\eps\to 0}\suml_{i\in\I}\|u\e\|^2_{L^2(D_i\e)}=0
\end{gather}
(recall that the proof of \eqref{u_inD} relies on inequality \eqref{oldineq} and the condition $(d\e)^2=o(\a\e)$). 
Finally, using the Poincar\'{e} inequality, \eqref{aa1}, \eqref{aa2}, \eqref{ineq_pm} and the Cauchy-Schwarz inequality, we get
\begin{multline}\label{1.3}
\suml_{i\in\I}\beta\e\|u\e\|^2_{L^2(B_i\e)}\leq C\suml_{i\in\I}\b\e\left(\|u\e-\langle u\e\rangle_{B_i^{\eps}}\|^2_{L^2(B_i\e)}+\eps^{n}\left|\langle u\e\rangle_{B_i^{\eps}}-\langle u\e\rangle_{S_i^{\eps,-}}\right|^2\right.\\\left.+\eps^{n}\left|\langle u\e\rangle_{S_i^{\eps,-}}-\langle u\e\rangle_{S_i^{\eps,+}}\right|^2+\eps^{n}\left|\langle u\e\rangle_{S_i^{\eps,+}}-\langle \Pi_1\e u\e\rangle_{\Gamma_i^{\eps}}\right|^2+\eps^{n}\left|\langle \Pi_1\e u\e\rangle_{\Gamma_i^{\eps}}\right|^2\right)\\\leq
C_1\left(\eps r\e\|\nabla u\e\|^2_{L^2(\cupl_{i}B_i\e)}+{1\over q\e}  \a\e\|\nabla u\e\|^2_{L^2(\cupl_{i}D_i\e)}+\eps r\e\|\nabla \Pi_1\e u\e\|^2_{L^2(\cupl_{i}Y_i\e)}\right)+C_1 r\e\|\Pi_1\e u\e\|^2_{L^2(\Gamma)}
\end{multline}
Passing to the limit in \eqref{1.3} and taking \eqref{normalization2} into account  we obtain
\begin{gather}\label{1.4}
\liml_{\eps\to 0}\suml_{i\in\I}\beta\e\|u\e\|^2_{L^2(B_i\e)}\leq C\|u_1\|^2_{L^2(\Gamma)}.
\end{gather}

It follows from \eqref{1}-\eqref{1.2}, \eqref{1.4} that $u_1\not = 0$. Therefore in view of \eqref{llll} $\lambda$ is an eigenvalue of $\mathcal{A}_{\infty,r}$ as $r>0$, and
and eigenvalue of $\mathcal{A}_{\infty,0}$ as $r=0$.

Property \eqref{hb} of the Hausdorff convergence is proved in the same way as in the case $q<\infty$ (using the test-function ${w}\e(x)$ defined before by \eqref{we1} instead of the one defined by \eqref{we}). 
This completes the proof of Theorem \ref{th1} for the case $q=\infty$.

\section{\label{sec4}Spectrum of a waveguide}

In this section we consider the unbounded waveguide type domain $\Omega\subset\mathbb{R}^2$:
$$\Omega=\mathbb{R}\times \left(d_-,d_+\right),\quad  d_-<0<d_+.$$
In this case $$\Gamma=\{x=(x_1,x_2):\ x_2=0\}.$$
We again suppose that conditions \eqref{ass2}-\eqref{ass4} hold. In what follows we consider the case $q>0$, $r>0$ only.  

In the same way as before we introduce the Hilbert spaces $\mathcal{H}\e$ and $\mathcal{H}_r$, and the operators $\mathcal{A}\e$ and $\mathcal{A}_{q,r}$. Furthermore, for brevity we will use the notations $\mathcal{H}$ and $\mathcal{A}$ instead of $\mathcal{H}_r$, $\mathcal{A}_{q,r}$, correspondingly.  

In order to state the result we need to introduce some additional notations.
For fixed $\mu\in\mathbb{R}$  we denote by $\a(\mu)$ the smallest eigenvalue of the problem
\begin{gather*}
-u''=\lambda u\text{ in }\left(d_-,d_+\right)\setminus\{0\},\\
u(d_-)=u(d_+)=0,\\
u(-0)=u(+0),\quad u'(-0)-u'(+0)=\mu u(0).
\end{gather*}
Similarly to the proof of Lemma \ref{lmAqr} we conclude that the function  $\mu\mapsto \a(\mu)$ is continuous, monotonically decreasing and moreover $\alpha(\mu)\underset{\mu\to -\infty}\to \minl\left\{\left({\pi\over d_-}\right)^2,\left({\pi\over d_+}\right)^2\right\}$\ \footnote{This is the smallest eigenvalue of the problem $-u''=\lambda u\text{ on }(d_-,d_+)\setminus\{0\},\ u(d_-)=u(0)=u(d_+).$} and $\alpha(\mu)\underset{\mu\to\infty}\to -\infty$. 
Using these properties one can easily conclude that there exists one and only one point $\mu_1\in (-qr,\infty)$ satisfying
\begin{gather*}
\a(\mu_1)={q\mu_1 \over qr+\mu_1},
\end{gather*}
and if $q<\min\left\{{\pi^2\over d_-^2},{\pi^2\over d_+^2}\right\}$ then there exists one and only one point $\mu_2\in (-\infty,-qr)$
satisfying
\begin{gather*}
\a(\mu_2)={q\mu_2 \over qr+\mu_2},
\end{gather*}
moreover
$$0<\a(\mu_1)<q<\a(\mu_2).$$

Now we are able to present the main result of this section.
\begin{theorem}\label{th3}
Let $l\subset \mathbb{R}$ be an arbitrary compact interval.
Then the set $\sigma(\mathcal{A}\e)\cap l$ converges in the Hausdorff 
sense as $\eps\to 0$ to the set $\sigma(\mathcal{A})\cap l$.

The spectrum of the operator $\mathcal{A}$ has the following form:
\begin{gather}\label{aqa}
\sigma(\mathcal{A})=\mathcal{D}:=
\begin{cases}
[\a(\mu_1),q]\cup[\a(\mu_2),\infty)&\text{ if }q<\min\left\{{\pi^2\over d_-^2},{\pi^2\over d_+^2}\right\},\\
[\a(\mu_1),\infty)&\text{ otherwise.}
\end{cases}
\end{gather}

\end{theorem}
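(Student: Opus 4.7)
The plan is to split Theorem \ref{th3} into two independent assertions: the Hausdorff convergence $\sigma(\mathcal{A}\e)\cap l\to\sigma(\mathcal{A})\cap l$, and the explicit description \eqref{aqa} of $\sigma(\mathcal{A})$. I would handle the convergence first by adapting Section \ref{sec3}, and then analyse $\sigma(\mathcal{A})$ by fibering along the unbounded direction.

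For the convergence I would essentially replay the proof of Theorem \ref{th1}. The novelty is that $\sigma(\mathcal{A}\e)$ and $\sigma(\mathcal{A})$ need no longer be purely discrete, so genuine eigenfunctions must be replaced by Weyl sequences via the Weyl criterion. Given $\lambda\e\in\sigma(\mathcal{A}\e)$ with $\lambda\e\to\lambda$, I would pick $u\e\in\mathrm{dom}(\mathcal{A}\e)$ with $\|u\e\|_{\mathcal{H}\e}=1$ and $\|(\mathcal{A}\e-\lambda\e)u\e\|_{\mathcal{H}\e}\to 0$. Exploiting the translation invariance of $\mathcal{A}\e$ along $x_1$ (up to a shift by a multiple of $\eps$) one can arrange that the mass of $u\e$ concentrates in a bounded cell, after which the local arguments of Subsections \ref{subsec32} and \ref{subsec34} apply and yield property \eqref{ha}. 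For property \eqref{hb} I would dually start from a Weyl sequence for $\lambda\in\sigma(\mathcal{A})$, cut it off to a finite $x_1$-window, and glue in the correctors $v_i\e$ from \eqref{v}-\eqref{A} to build approximate eigenfunctions of $\mathcal{A}\e$.

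For the spectrum of $\mathcal{A}$, since $\mathcal{A}$ is translation invariant in $x_1$, the plan is to apply the partial Fourier transform $\mathcal{F}_1$ in $x_1$, which decomposes $\mathcal{A}$ as a direct integral
\begin{gather*}
\mathcal{F}_1\mathcal{A}\mathcal{F}_1^{-1}=\intl_{\mathbb{R}}^{\oplus}\mathcal{A}(k)\d k,
\end{gather*}
where each fibre $\mathcal{A}(k)$ acts in $L^2(d_-,d_+)\oplus\mathbb{C}$ (the second factor carrying weight $r$) and is associated with the form
\begin{gather*}
\eta^k[U,V]=\intl_{d_-}^{d_+}\bigl(u_1'\overline{v_1'}+k^2 u_1\overline{v_1}\bigr)\d x_2+qr\bigl(u_1(0)-u_2\bigr)\overline{(v_1(0)-v_2)}.
\end{gather*}
Then $\sigma(\mathcal{A})=\overline{\bigcup_{k\in\mathbb{R}}\sigma(\mathcal{A}(k))}$. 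Since each $\mathcal{A}(k)$ is precisely the analog of $\mathcal{A}_{q,r}$ on the cross-section $(d_-,d_+)$ with an additive shift $k^2$, Lemma \ref{lmAqr} carries over: the eigenvalues $\lambda\neq q$ of $\mathcal{A}(k)$ satisfy $\alpha_j(\mu)+k^2=\lambda$ with $\mu=\lambda qr/(q-\lambda)$, where $\alpha_j$ are the eigenvalues of the 1D interface problem from the definition of $\alpha$, so that $\alpha_1=\alpha$. Using the direct 1D analog of Proposition \ref{lm1} (continuity and monotone decrease of $\alpha_j$, with limits $\alpha_j(-\infty)$ equal to the $j$-th Dirichlet eigenvalue on $(d_-,0)\cup(0,d_+)$ and $\alpha_j(\infty)=-\infty$), one tracks the intersections of the curves $\lambda=\alpha_j(\mu)+k^2$ with $\mathcal{C}_{\pm}$ as $k$ varies: the lowest $\mathcal{C}_+$-branch $\lambda_1^+(k)$ is continuous and monotone in $k$, starts at $\alpha(\mu_1)$ at $k=0$ and tends to $q$ as $k\to\infty$, producing the band $[\alpha(\mu_1),q]$; the lowest $\mathcal{C}_-$-branch $\lambda_1^-(k)$ exists for all $k\geq 0$ iff $q<\min\{\pi^2/d_-^2,\pi^2/d_+^2\}$, in which case it starts at $\alpha(\mu_2)$ and sweeps $[\alpha(\mu_2),\infty)$, while in the opposite case it appears only above a threshold in $k$ and sweeps $[q,\infty)$, merging with the $\mathcal{C}_+$ band to fill $[\alpha(\mu_1),\infty)$. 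Finally, $q\in\sigma(\mathcal{A})$ since $q\in\sigma(\mathcal{A}(k))$ for every $k$ by the Lemma \ref{lmAqr} analog.

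The main obstacle is to rule out any spectrum inside the putative gap $(q,\alpha(\mu_2))$ in the sub-critical case. Concretely, for every $k\geq 0$ and every $j\geq 1$ one must check $\lambda_j^-(k)\geq\alpha(\mu_2)$, which follows from the eigenvalue ordering $\alpha_1\leq \alpha_2\leq\dots$ together with the monotonicity of $\lambda_1^-(\cdot)$ in $k$; dually, one must verify that no $\mathcal{C}_+$-branch of any fibre exceeds $q$, which is the analog of the inequality $\lambda_k^+<q$ from \eqref{distr}. Both checks are delicate precisely because the spectral parameter enters the interface condition in the nonlinear manner that produces the two branches of the curve $\mathcal{C}$.
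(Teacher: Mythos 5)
Your computation of $\sigma(\mathcal{A})=\mathcal{D}$ via the partial Fourier transform in $x_1$ and the direct integral $\mathcal{F}_1\mathcal{A}\mathcal{F}_1^{-1}=\int_{\mathbb{R}}^{\oplus}\mathcal{A}(k)\,\d k$ is correct, and it differs genuinely from what the paper does. The paper never diagonalizes $\mathcal{A}$ itself; instead it approximates it by two families of operators on truncated cylinders $\Omega^L$ --- one with periodic boundary conditions at $x_1=\pm L$ (whose eigenvalues densely fill $\mathcal{D}$ as $L\to\infty$ and which feed a Weyl-sequence construction for $\mathcal{D}\subset\sigma(\mathcal{A})$), and one with Dirichlet conditions (whose resolvents, controlled uniformly away from $\mathcal{D}$, give the reverse inclusion as $L\to\infty$). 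Your fiber decomposition is cleaner and more self-contained: it avoids the assertion $\sigma(\mathcal{A}^L)\subset\mathcal{D}$ which the paper uses without proof, and the band-filling arguments through the intersections of $\lambda=\alpha_j(\mu)+k^2$ with $\mathcal{C}_\pm$, together with the threshold analysis depending on whether $q<\min\{\pi^2/d_-^2,\pi^2/d_+^2\}$, are sound. The only imprecision is your phrase \emph{``with an additive shift $k^2$''}: $\mathcal{A}(k)$ is \emph{not} $\mathcal{A}(0)+k^2 I$ on $L^2(d_-,d_+)\oplus\mathbb{C}$; the $k^2$ shifts only the $-\partial_{x_2}^2$ part, which is precisely what makes $\mu=\lambda qr/(q-\lambda)$ (and not $\mu=(\lambda-k^2)qr/(q-\lambda+k^2)$) the right relation. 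You in fact write the correct relation, so this is only a wording issue.

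The Hausdorff convergence part, however, contains a genuine gap. Your plan for property \eqref{ha} --- take a Weyl sequence $u\e$ for $\mathcal{A}\e$ and \emph{``exploit the translation invariance to arrange that the mass of $u\e$ concentrates in a bounded cell''} --- does not work. Weyl sequences for continuous spectrum are precisely the ones that \emph{fail} to concentrate; the $\eps\mathbb{Z}$-translation symmetry lets you recentre a localization peak if one exists, but it cannot manufacture one, and without concentration the Rellich-compactness step that underpins the extraction of $(u_1,u_2)$ via \eqref{Pi1conv1}--\eqref{Pi2conv} in Subsections \ref{subsec32} and \ref{subsec34} collapses on the infinite strip. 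What the paper does instead is restrict to $\eps^{-1}\in\mathbb{N}$ (so that $\mathcal{A}\e$ is $1$-periodic in $x_1$), invoke Floquet--Bloch theory to write $\sigma(\mathcal{A}\e)=\bigcup_\phi\sigma(\widetilde{\mathcal{A}}^{\phi,\eps})$ with $\widetilde{\mathcal{A}}^{\phi,\eps}$ the quasi-periodic fiber operator on the compact cell $\Om$, extract $\phi\e\to\phi$ and a genuine eigenfunction $u\e$ of $\widetilde{\mathcal{A}}^{\phi\e,\eps}$ corresponding to $\lambda\e$, and then run the Section \ref{sec3} machinery on the cell (with the extra care needed because the quasi-periodicity parameter $\phi\e$ drifts). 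Your fiber decomposition of $\mathcal{A}$ does not substitute for this, because $\mathcal{A}\e$ is only $\eps\mathbb{Z}$-periodic, not translation-invariant, and the $\eps$-scale cell degenerates as $\eps\to 0$; a Floquet reduction on a fixed cell is indispensable here and your proposal does not mention it.
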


\begin{proof}First let us prove \eqref{aqa}. 

For $L>0$ we denote $$\Omega^L=\left\{x=(x_1,x_2)\in \mathbb{R}^2:\ x_1\in (-L,L),\ x_2\in \left(d_-,d_+\right)\right\},\quad \Gamma^L=\Gamma\cap\Omega^L.$$

By $\mathcal{H}^L$ we denote the Hilbert space of functions from $L^2(\Omega^L)\oplus L^2(\Gamma^L)$ and the scalar product defined by \eqref{H_qr} with $\Omega^L$ and $\Gamma^L$ instead of $\Omega$ and $\Gamma$.

We denote by $\eta^{L,\#}$  the sesquilinear form in $\mathcal{H}^L$
which is defined by \eqref{eta_qr} (with $\Omega^L$ and $\Gamma^L$ instead of $\Omega$ and $\Gamma$) and the domain $$\mathrm{dom}(\eta^{L,\#})=\left\{(u_1,u_2)\in H^1(\Omega^L)\oplus L^2(\Gamma^L):\quad u_1(-L,\cdot) = u_1(L,\cdot),\quad u_1(\cdot,d_-)=u_1(\cdot,d_+)=0\right\}.$$
We define the operator $\mathcal{A}^{L,\#}$  acting in $\mathcal{H}^L$ being associated with this form. 

In the same way as in Lemma \ref{lmAqr} (or via direct calculations) we conclude that
the spectrum of $\mathcal{A}^{L,\#}$ consists of the point $q$ (the only point of the essential spectrum) and two sequences of eigenvalues
$\left\{\lambda_k^{L,\#,\pm}\right\}_{k\in\mathbb{N}}$ 
satisfying \eqref{distr} (with $\lambda_k^{L,\#,\pm}$ instead of $\lambda_k^{\pm}$).
Furthermore, it is easy to see that
$\liml_{L\to\infty}\lambda_1^{L,\#,-}=\a(\mu_1)$, while $\liml_{L\to\infty}\lambda_1^{L,\#,-} = \a(\mu_2)$ if $q<\min\left\{{\pi^2\over d_-^2},{\pi^2\over d_+^2}\right\}$, otherwise $\liml_{L\to\infty}\lambda_1^{L,\#,-} = q$. Moreover $\lambda_k^{L,\#,+}$ (respectively, $\lambda_k^{L,\#,-}$) are distributed more and more dense on the interval $[\lambda_1^{L,\#,-},q]$ (respectively, on the ray $[\lambda_1^{L,\#,-},\infty)$) as $L$ increases, namely
one has the equality
\begin{gather}\label{cupAL}
\cupl_{L=1}^\infty\sigma(\mathcal{A}^{L,\#})=\mathcal{D}.
\end{gather}

Let $\lambda$ be an eigenvalue of $\mathcal{A}^{L,\#}$, $U$ be the corresponding eigenfunction normalized by $\|U\|_{\mathcal{H}^L}=1$. We extend $U$ to the whole $\Omega$ by periodicity and set
$$U_N(x)={1\over \sqrt{N}}U(x)\Phi\left(N^{-1}|x_1|\right),$$
where $\Phi:\mathbb{R}\to \mathbb{R}$ is a smooth function such that $\Phi(r)=1$ as $r\leq 1$ and $\Phi(r)=0$ as $r\geq 2$. It is easy to show that
\begin{gather*} U_N\in \mathrm{dom}(\mathcal{A}),\quad
\|\mathcal{A} U_N-\lambda U_N\|_{\mathcal{H}}\to 0\text{ as }{N\to\infty},\\
0<C_1\leq\|U_N\|_{\mathcal{H}}\leq C_2
\end{gather*}
(the constants $C_1,C_2$ are independent of $N$) and therefore (see, e.g., \cite{Davies}) $\lambda\in \sigma(\mathcal{A})$. Thus we have proved that
\begin{gather*}
\forall L>0:\  \sigma(\mathcal{A}^{L,\#})\subset\sigma(\mathcal{A}),
\end{gather*}
whence, in view of \eqref{cupAL}, $$\mathcal{D}\subset \sigma(\mathcal{A}).$$

Now, we prove the reverse enclosure. Let $\lambda\in \mathbb{R}\setminus\mathcal{D}$. We have to prove that $\lambda$ belongs to the resolvent set of $\mathcal{A}$, i.e. for every $  F\in\mathcal{H}$ there is $U\in\mathrm{dom}(\mathcal{A})$ such that $\mathcal{A}U-\lambda U=F$.

We denote by $\eta^{L}$ the sesquilinear form which is defined
by \eqref{eta_qr}  (with $\Omega^L$ and $\Gamma^L$ instead of $\Omega$ and $\Gamma$) and the domain $\mathrm{dom}(\eta^{L})=H^1_0(\Omega^L)\oplus L^2(\Gamma^L)$. Let $\mathcal{A}^L$ be the operator acting in $\mathcal{H}^L$ which associated with this form.

One can easily calculate that $\sigma(\mathcal{A}^L)\subset\mathcal{D}$ for every $L>0$ and therefore, since $\lambda\notin \mathcal{D}$, for every $F^L\in\mathcal{H}^L$ there is $U^L=(u^L_1,u^L_2)\in\mathrm{dom}(\mathcal{A}^L)$ such that $\mathcal{A}^LU^L-\lambda U^L=F^L$.

Let us fix $F=(f_1,f_2)\in\mathcal{H}$ and set $F^L:=(f_1|_{\Omega^L},f_2|_{\Gamma^L})$. One has
\begin{gather}\label{UL2}
\|U^L\|_{\mathcal{H}_{\Omega^L}}\leq \mathrm{dist}(\lambda,\mathcal{D})\|F^L\|_{\mathcal{H}^L}= \mathrm{dist}(\lambda,\mathcal{D})\|F\|_{\mathcal{H}}\leq C,
\end{gather}
and as a consequence
\begin{gather}\label{UH1}
\|\nabla u^L_1\|_{\mathcal{H}^L}\leq C.
\end{gather}

We extend $u^L_1$ (respectively, $u^L_2$) by $0$ to $\Omega$ (respectively, $\Gamma$) using the same notations for the extended functions.
Obviously $u^L_1\in H^1_0(\Omega)$, $u^L_2\in L^2(\Gamma)$. It follows from \eqref{UL2}-\eqref{UH1} that there exists a subsequence (still indexed by $L$) and $u_1\in H^1(\Omega)$ and $u_2\in L^2(\Gamma)$ such that
\begin{gather}\label{u12L}
u^L_1\rightharpoonup u_1\text{ in }H^1(\Omega),\quad u^L_2
\rightharpoonup u_2\text{ in }L^2(\Gamma)\text{ as }L\to\infty.
\end{gather}

Let $(w_1,w_2)\in C^\infty_0(\Omega)\oplus L^2(\Gamma)$. When $L$ is large enough then $\supp(w_1)\subset\Omega^L$ and therefore one can write:
\begin{multline}\label{int_eq_f_final+}
\intl_{\Omega}\nabla u^L_1\cdot \nabla w_1 \d x+\intl_\Gamma qr (u^L_1-u^L_2)(w_1-w_2)\d s-\lambda\left(\intl_{\Omega}u^L_1 w_1 \d x+r\intl_{\Gamma}u^L_2 w_2 \d s\right)\\=\intl_{\Omega}f_1 w_1 \d x+r\intl_{\Gamma}f_2   w_2 \d s.
\end{multline}
Using \eqref{u12L} we pass to the limit in \eqref{int_eq_f_final+} and obtain that $U=(u_1,u_2)$ also satisfies 
\eqref{int_eq_f_final+}, i.e.
$$\mathcal{A}U-\lambda U=F.$$
Thus  $\lambda$ belongs to the resolvent set of $\mathcal{A}$. 
This finishes the proof of \eqref{aqa}. 
\medskip

Let us now turn to the proof of the Hausdorff convergence. Recall that we have to show the fulfilment of properties \eqref{ha} and \eqref{hb}.

The proof of   property \eqref{hb} of the Hausdorff convergence repeats word-by-word the proof in case of a bounded domain $\Omega$. Therefore we focus of the proof of property \eqref{ha}: let $\lambda\e\in \sigma(\mathcal{A}\e)$, $\liml_{\eps\to 0}\lambda\e=\lambda$, and we have to prove that $\lambda\in \sigma(\mathcal{A})$.

For the sake of clarity we suppose that the shells are centered at the points 
\begin{gather}\label{shells}
\tilde x^{i,\eps}=(i\eps+{1\over 2}\eps,0),
\end{gather}
i.e. we shift the shells along $\Gamma$ by $\eps/2$. Obviously, this shift does not change the spectrum of $\mathcal{A}\e$.

We denote
$${\Om}=\left\{x=(x_1,x_2)\in \mathbb{R}^2:\ x_1\in (0,1),\ x_2\in \left(d_-,d_+\right)\right\},\quad \Ga=\Gamma\cap\Om.$$
It is clear that  $$a\e(x_1+1,x_2)=a\e(x_1,x_2),\ b\e(x_1+1,x_2)=b\e(x_1,x_2)\ \text{ provided }\eps^{-1}\in\mathbb{N},$$ 
i.e. $\mathcal{A}\e$ is periodic with respect to the cell $\Om$ provided $\eps^{-1}\in\mathbb{N}$. Moreover, in view of the shift \eqref{shells}, one has for $\eps^{-1}\in\mathbb{N}$:
$$\cupl_{Y_i\e\subset\Om}\Gamma_i\e=\cupl_{i=1}^{N(\eps)}\Gamma_i\e=\widetilde\Gamma\text{, where }N(\eps)=\eps^{-1}.$$ 
Further we will study the subsequence $\lambda^{\eps_k}$, where $\eps_k={k}^{-1}$, $k=1,2,3\dots$. For convenience we will use the notation $\eps$ keeping in mind $\eps_k$. 

It is well-known from   Floquet-Bloch theory (see, e.g., \cite{Brown,Eastham,Kuchment}) that the spectrum of $\mathcal{A}\e$ can be expressed as a union of spectra of certain operators on the period cell. By $\widetilde{\mathcal{H}}\e$ we denote the space of functions from $L^2(\Om)$ and the scalar product defined by \eqref{He} with $\Om$ instead of $\Omega$. Let $\phi\in [0,2\pi)$. In the space $\widetilde{\mathcal{H}}\e$ we consider the sesquilinear form $\widetilde{\eta}^{\phi,\eps}$ defined by \eqref{eta} (with $\Om$ instead of $\Omega$) and the domain
$$\mathrm{dom}(\widetilde\eta^{\phi,\eps})=\left\{u\in H^1(\Om):\quad u(0,\cdot){=}e^{i\phi}u(1,\cdot),\quad u(\cdot,d_-)=u(\cdot,d_+)=0\right\}.$$
By $\widetilde{\mathcal{A}}^{\phi,\eps}$ we denote the operator associated with this form.
The spectrum of $\widetilde{\mathcal{A}}^{\phi,\eps}$ is purely discrete. We denote by $\left\{\widetilde\lambda_k^{\phi,\eps}\right\}_{k\in\mathbb{N}}$ the sequence of its eigenvalues 
written in the ascending order and with account to their multiplicity.
One has the following representation:
\begin{gather}\label{floque}
\sigma(\mathcal{A}\e)=\cupl_{k=1}^\infty I_k\e,\text{ where }I_k\e=\cupl_{\phi\in[0,2\pi)}\left\{\widetilde\lambda_{k}^{\phi,\eps}\right\}.
\end{gather}
The sets $I_k\e$ are compact intervals.

We also introduce the operator $\widetilde{\mathcal{A}}^{\phi}$ as the operator acting in 
$\widetilde{\mathcal{H}}:=L^2(\Om)\oplus L^2(\Ga)$ equipped with a scalar product defined by \eqref{H_qr} (with $\Om,\Ga$ instead of $\Omega,\Gamma$), being associated with the sesquilinear form $\widetilde \eta^\phi$ which is defined by \eqref{eta_qr} (with $\Om,\Ga$ instead of $\Omega,\Gamma$) and domain $\mathrm{dom}(\widetilde \eta^\phi)=\mathrm{dom}(\eta^{\phi,\eps})\oplus L^2(\Ga)$.

In the same way as in Lemma \ref{lmAqr} we conclude that
\begin{gather*}
\label{spectrum} \sigma(\widetilde{\mathcal{A}}^{\phi})=\{q\}\cup
\{\widetilde\lambda_k^{\phi,-},k=1,2,3...\}\cup\{\widetilde\lambda_k^{\phi,+},k=1,2,3...\},
\end{gather*}
the points $\widetilde\lambda_k^{\phi,\pm}, k=1,2,3...$ belong to the discrete
spectrum, $q$ is the only point of the essential spectrum and  
\begin{gather}\label{spec1}
\a(\mu_1)\leq\widetilde\lambda_1^{\phi,+}\leq \widetilde\lambda_2^{\phi,+}\leq ...\leq\widetilde\lambda_k^{\phi,+}\leq\dots\underset{k\to\infty}\to
q< \widetilde\lambda_1^{\phi,-}\leq
\widetilde 
\lambda_2^{\phi,-}\leq ...\leq\widetilde\lambda_k^{\phi,-}\leq\dots\underset{k\to\infty}\to \infty.\end{gather}
Moreover if $q<\min\left\{{\pi^2\over d_-^2},{\pi^2\over d_+^2}\right\}$ then
\begin{gather}\label{spec2}
\a(\mu_2)\leq   \widetilde\lambda_1^{\phi,-}.
\end{gather}

In view of \eqref{floque} there exists $\phi\e\in [0,2\pi)$ such that $\lambda\e\in\sigma(\widetilde{\mathcal{A}}^{\phi\e,\eps})$. We extract a subsequence (still indexed by $\eps$) such that
\begin{gather}\label{phi_to_phi}
\phi\e\to\phi\in [0,2\pi]\text{ as }\eps\to 0.
\end{gather}

Let $u\e$ be the eigenfunction of $\widetilde{\mathcal{A}}^{\phi\e,\eps}$ 
corresponding to $\lambda\e$ and normalized by the condition $\|u\e\|_{\widetilde{\mathcal{H}}\e}=1$.

In the same way as in the proof of Theorem \ref{th1} we conclude that there exists a subsequence (still indexed by $\eps$), $u_1\in H^1(\Om)$ and $u_2\in L^2(\Ga)$ such that
\begin{gather}\label{conv_phi}
\Pi_1\e u\e\rightharpoonup u_1\text{ in }H^1(\Om),\
\Pi_1\e u\e\rightarrow u_1\text{ in }L^2(\Om),\ \Pi_1\e u\e\rightarrow u_1\text{ in }L^2(\Ga),\
\Pi_2\e u\e\rightharpoonup u_2\text{ in }L^2(\Ga).
\end{gather}
(the operators $\Pi_1\e$ and $\Pi_2\e$ were defined in Subsection \ref{subsec32}). In particular, it follows from \eqref{phi_to_phi}-\eqref{conv_phi} that $U=(u_1,u_2)\in \mathrm{dom}(\widetilde\eta^{\phi})$.

If $u_1=0$ then $\lambda=q$, the proof repeats word-by-word the proof of this fact in Theorem \ref{th1}.

Now, let $u_1\not= 0$. For an arbitrary $w\in \mathrm{dom}(\eta^{\phi\e,\eps})$ we have
\begin{gather}\label{in_eq_phi}
\intl_\Om a\e\nabla u\e\cdot \nabla \overline{w} \d x=\lambda\e\intl_\Om b\e u\e \overline{w} \d x.
\end{gather}

Let $w_1,\ w_2$ be an arbitrary functions from $C^{\infty}(\Om)$ and $C^\infty(\Ga)$, respectively, moreover satisfying $$w_1(0,\cdot){=}e^{i\phi}w_1(1,\cdot),\quad w_1(\cdot,d_-)=w_1(\cdot,d_+)=0.$$
Using these functions we construct the function $w\e$ by the formula \eqref{we}. It is clear that $w\e\in \mathrm{dom}(\widetilde\eta^{\phi,\eps})$.
Finally we set
$$\hat w\e(x)=w\e(x)\left((e^{i(\phi\e-\phi)}-1)(1-x_1)+1\right),\ x=(x_1,x_2).$$
It is easy to see that $\hat w\e\in \mathrm{dom}(\widetilde\eta^{\phi\e,\eps})$ and $\widetilde\eta^{\phi\e,\eps}[\hat w\e-w\e,\hat w\e-w\e]+\|\hat w\e-w\e\|^2_{\widetilde{\mathcal{H}}\e}\underset{\eps\to 0}\to 0$.

Plugging $w=\hat w\e(x)$ into \eqref{in_eq_phi} we obtain
\begin{gather}\label{in_eq_psi+}
\intl_{\Om}a\e\nabla u\e\cdot\nabla \overline{w\e} \d x +\delta(\eps)=\intl_\Om b\e u\e \overline{w\e} \d x,
\end{gather}
where the remainder $\delta(\eps)$ vanishes as $\eps\to 0$:
\begin{gather*}
|\delta(\eps)|^2\leq 2\lambda\e \eta^{\phi\e,\eps}[\hat w\e-w\e,\hat w\e-w\e]+2\|\hat w\e-w\e\|^2_{\widetilde{\mathcal{H}}\e}  \underset{\eps\to 0}\to 0 .
\end{gather*}

Then passing to the limit $\eps\to 0$ in \eqref{in_eq_psi+} in the same way as in the proof of Theorem \ref{th1} we obtain:
\begin{gather}\label{last}
\intl_{\Om}\nabla u_1\cdot\nabla  \overline{w_1} \d x+\intl_\Ga \left(qr u_1\overline{w_1} +q\sqrt{r}u_1\overline{w_2}+q\sqrt{r}u_2\overline{w_1}+ q u_2\overline{w_2}\right)   \d s=\lambda\left(\intl_{\Om}u_1 \overline{w_1} \d x+\intl_{\Ga}u_2 \overline{w_2} \d s\right).
\end{gather}
Using density arguments we conclude that \eqref{last} holds for any arbitrary $(w_1,w_2)\in\mathrm{dom}(\widetilde\eta^{\varphi})$, whence, evidently,
$$\widetilde{\mathcal{A}}^{\phi}U=\lambda U.$$
Since $u_1\not=0$ we obtain $\lambda\in\sigma(\widetilde{\mathcal{A}}^{\phi})$. Then in view of \eqref{spec1}-\eqref{spec2} $\lambda\in \mathcal{D}$.

Theorem \eqref{th2} is proved.                                                                                                                                                                                                                                                                                                                                                                                                                                                                                              \end{proof}

\section*{Acknowledgements}We gratefully acknowledge financial support by the Deutsche
Forschungsgemeinschaft (DFG) through RTG 1294 and CRC 1173.

\footnotesize

\end{document}